\theoremstyle{definition}
\newtheorem{definition}{Definition}[section]
\newtheorem{theorem}{Theorem}[section]
\newtheorem{corollary}[theorem]{Corollary}
\newtheorem{lemma}[theorem]{Lemma}
\newcommand{\res}{\textsc{Res}}
\newcommand{\1}{\mathbf{1}}
\newcommand{\bu}{\mathbf{u}}
\newcommand{\bv}{\mathbf{v}}
\newcommand{\spec}{\text{spec}}
\newcommand{\cA}{\mathcal{A}}
\newcommand{\cD}{\mathcal{D}}
\newcommand{\cF}{\mathcal{F}}
\newcommand{\cI}{\mathcal{I}}
\newcommand{\bx}{\mathbf{x}}
\newcommand{\by}{\mathbf{y}}
\newcommand{\bz}{\mathbf{z}}
\newcommand{\Tr}{\text{Tr}}
\newcommand{\tr}{\text{tr}}
\newcommand{\supp}{\text{supp}}
\renewcommand{\thefootnote}{\fnsymbol{footnote}}
\title{Spectra of Uniform Hypergraphs}
\begin{document}

\maketitle

\begin{center} Joshua Cooper\footnote{\noindent cooper@math.sc.edu, This work was funded in part by NSF grant DMS-1001370.} and Aaron Dutle\footnote{\noindent dutle@mailbox.sc.edu, Corresponding author. } \\ Department of Mathematics\\ University of South Carolina\let\thefootnote\relax\footnote{\noindent Subject Classification: Primary 05C65; Secondary 15A69, 15A18.} \let\thefootnote\relax\footnote{ Keywords: Hypergraph, Spectrum, Resultant, Characteristic Polynomial.}
\end{center}
\begin{abstract}
We present a spectral theory of uniform hypergraphs that closely parallels
Spectral Graph Theory. A number of recent developments building upon classical work has led to a rich understanding of
``symmetric hyperdeterminants'' of hypermatrices, a.k.a.~multidimensional arrays.
Symmetric hyperdeterminants share many properties with determinants, but the
context of multilinear algebra is substantially more complicated than
the linear algebra required to address Spectral Graph Theory (i.e., ordinary matrices). Nonetheless, it is possible to define eigenvalues of
a hypermatrix via its characteristic polynomial as well as variationally. We
apply this notion to the ``adjacency hypermatrix'' of a uniform hypergraph, and prove a number of natural analogues of basic results in Spectral Graph Theory. Open problems abound, and we present a number of directions for further study.
\end{abstract}



\tableofcontents

\section{Introduction}

Spectral Graph Theory is a widely studied and highly applicable subject in combinatorics, computer science, and the social sciences.  Broadly speaking, one first encodes the structure of a graph in a matrix $M$ and then pursues connections between graph properties and the eigenvalues or singular values of $M$.  Work has addressed the ``adjacency matrix'', as well as other matrices that come from a graph: the ``(combinatorial) Laplacian'', the ``unsigned Laplacian'', the ``normalized Laplacian'', the ``bipartite adjacency matrix'', the ``incidence matrix'', and others (q.v.~\cite{big93,chu97,cve88,cve80}).  One natural avenue of study is the generalization of spectral techniques to hypergraphs, though there is a conspicuous paucity of results known in this vein.  There have been attempts in the literature to define eigenvalues for hypergraphs and study their properties, with varying amounts of success.  Notable examples include \cite{chu93,fl96,fw95,lu09,r09}.  Most of this work concerns generalizations of the {\it Laplacian} spectrum of a graph, and has a very different flavor than the subject we discuss in the sequel.

Unfortunately, na\"{i}vely attempting to generalize the spectral theory of adjacency matrices to hypergraphs runs into serious obstacles right away.  There is no obvious way to define an adjacency matrix for a hypergraph, since edges are specified by more than two vertices.  For a $k$-uniform hypergraph, one can obtain a straightforward generalization to an order-$k$ array (essentially, a tensor), but in doing so, one loses the powerful and sophisticated tools of linear algebra with which to analyze it.  Another tack, which is related to the $k$-dimensional array strategy, is to consider eigenvalues as the simultaneous vanishing of a system of equations each containing the parameter $\lambda$, where the eigenvalues are those $\lambda$ for which the system has a solution. This generalizes the idea that matrix eigenvalues are the vanishing of linear equations with $\lambda$ as a parameter.

Recent work \cite{cpz08,fgh10,lim05,russians,qi05} has provided some of the framework and tools with which to analyze such higher dimensional arrays, and we employ these developments extensively to define and analyze our hypergraph eigenvalues.  We obtain a number of results closely paralleling results from classical spectral graph theory, including bounds on the largest eigenvalue, a spectral bound on the chromatic number, and a sub-hypergraph counting description of the coefficients of the characteristic polynomial.  We also describe the spectrum for some natural hypergraph classes and operations, including disjoint unions, Cartesian products, $k$-partite graphs, $k$-cylinders, a generalization of the hypercube, and complete hypergraphs.

Recent work has used variations of the hypergraph eigenvalues we describe to obtain results about the maximal cliques in a hypergraph \cite{bp09a}, cliques in a graph based on hypergraphs arising from the graph \cite{bp09b}, and connectivity properties for hypergraphs of even uniformity \cite{hu11+}. We add many basic results about hypergraph eigenvalues to this body of work. Many of our results employ an algebraic characterization of the eigenvalues which has been underutilized in the study of hypergraphs.  Furthermore, our characterizations of the eigenvalues (and, in some cases, eigenvectors) of certain hypergraph classes gives several classes of hypermatrices for which we now understand the spectrum.  We hope that this work can provide a foundation for further study of the eigenvalues of symmetric hypermatrices.

The remainder of the paper is organized as follows. In Section 2, we give definitions and background on eigenvalues of symmetric hypermatrices, including both variational and algebraic formulations. In Section 3, we define the adjacency hypermatrix for a $k$-uniform hypergraph, and derive hypergraph generalizations of many of the central results of Spectral Graph Theory. Section 4 explores the spectra of several ``common'' hypergraphs: complete graphs, Cartesian products, $k$-cylinders, etc. Section 5 outlines a surfeit of directions for further study.

\section{Eigenvalues of Symmetric Hypermatrices}
We begin by defining the array that we will use to encode a $k$-uniform hypergraph.  We denote the set $\{1,\ldots,n\}$ by $[n]$.

\begin{definition} \label{hypermat} A (cubical) \emph{hypermatrix} $\cA$ over a set $\mathbb{S}$ of dimension $n$ and order $k$ is a collection of $n^k$ elements $a_{i_1 i_2 \ldots i_k} \in \mathbb{S}$ where $i_j \in [n]$.
\end{definition}

For the remainder of the present discussion, $\mathbb{S} = \mathbb{C}$.

\begin{definition} A hypermatrix is said to be \emph{symmetric} if entries which use the same index sets are the same.  That is, $\cA$ is symmetric if    $a_{i_1 i_2 \ldots i_k} = a_{i_{\sigma(1)} i_{\sigma(2)} \ldots i_{\sigma(k)}}$ for all $\sigma \in \mathfrak{S}_k$, where $\mathfrak{S}_k$ is the symmetric group on $[k]$.
\end{definition}

In the case of graphs, i.e., $k=2$, cubical hypermatrices are simply square matrices, and symmetric hypermatrices are just symmetric matrices.  It should be noted that some authors use the term \emph{tensor} in place of hypermatrix.  Strictly speaking, however, a hypermatrix is not simply a tensor: it is a tensor expressed in a particular basis.  It is worth noting that there are several additional departures from the above nomenclature in the literature.  Some authors (e.g., \cite{gkz94}) simply refer to hypermatrices as ``matrices'', while computer scientists often call them ``multidimensional arrays.''  Also, in order to use ``symmetric'' to refer to weaker notions of symmetric hypermatrices, some authors use ``supersymmetric'' to refer to what we term ``symmetric.''

An order $k$ dimension $n$ symmetric hypermatrix $\cA$ uniquely defines a homogeneous degree $k$ polynomial in $n$ variables (a.k.a.~a ``$k$-form'') by
\begin{equation} \label{form} F_{\cA}(\textbf{x})  = \sum_{i_1, i_2, \ldots , i_k = 1}^n  a_{i_1 i_2 \ldots i_k}x_{i_1}x_{i_2} \ldots x_{i_k}. \end{equation}

 Qi (\cite{qi05}) and Lim (\cite{lim05}) offered several generalizations of the eigenvalues of a symmetric matrix to the case of higher order symmetric (or even non-symmetric) hypermatrices.  We employ the following definition from \cite{qi05}.

\begin{definition} \label{eigdef} Call $\lambda\in \mathbb{C}$ an \emph{eigenvalue} of $\cA$ if there is a non-zero vector $\mathbf{x} \in \mathbb{C}^n,$ which we call an \emph{eigenvector}, satisfying

\begin{equation} \label{eigeq}  \sum_{i_2, i_3, \ldots, i_k=1}^n a_{ji_2i_3\ldots i_k}x_{i_2}\ldots x_{i_k} = \lambda x_j^{k-1}\end{equation}
for all $ j \in [n].$ \end{definition}

If we write $\mathbf{x}^r$ for the order $r$ dimension $n$ hypermatrix with $i_1, i_2, \ldots, i_r$ entry $x_{i_1}x_{ i_2} \ldots x_{i_r},$ and $\mathbf{x}^{[r]}$ for the vector with $i$-th entry $x_i^r$, then the expressions above can be written rather succinctly. Equation (\ref{form}) becomes
$$
F_{\cA}(\mathbf{x}) = \cA\mathbf{x}^k,
$$
where multiplication is taken to be tensor contraction over all indices. Similarly, the eigenvalue equations (\ref{eigeq}) can be written as
$$
\cA\mathbf{x}^{k-1} = \lambda\mathbf{x}^{[k-1]},
$$
where contraction is taken over all but the first index of $\cA$.  We mostly avoid the tensorial nomenclature, and work instead with polynomials, although we sometimes use the above notation for concision.

With the definitions above (and some generalizations of them), much work has been done using variational or analytic techniques, including conditions for the function $F_{\cA}(\mathbf{x})$ to be positive definite and Perron-Frobenius-type theorems (\cite{cpz08,fgh10,lim05}).
We rely on these results, but draw more from the algebraic approach suggested by Qi in \cite{qi05}, which uses a construction from algebraic geometry called the \emph{resultant}. We give a brief background and some useful properties of this construction.

\subsection{The Multipolynomial Resultant}

The resultant of two polynomials in one variable (or alternatively two homogeneous polynomials in two variables) is a classical construction used to determine if the two polynomials have a common root. It can be defined and calculated in a number of ways, including the determinant of the so-called ``Sylvester matrix'' of the two polynomials.  On the other extreme, if we have $n$ linear forms in $n$ variables, the determinant of the coefficient matrix tells us when these forms have a common non-trivial zero.  The multipolynomial resultant is a construction that unifies both concepts under a single framework. Readers wishing to learn more about the topic may find a highly algebraic treatment of the resultant and its generalizations in the text by Gelfand et al, \cite{gkz94}; those looking for a less specialized and more algorithmic approach may consult the text by Cox et al, \cite{clo98}.

We reproduce two theorems giving some important facts about the resultant.  For proofs of these results, see \cite{gkz94}.  First is the existence and (once suitably normalized) uniqueness of the resultant.

\begin{theorem} \label{resexist} Fix degrees $d_1, d_2, \ldots, d_n$. For  $i\in [n]$, consider all monomials $\mathbf{x}^\alpha$ of total degree $d_i$ in $x_1, \ldots, x_n$.  For each such monomial, define a variable $u_{i,\alpha}$. Then there is a unique polynomial $\res \in \mathbb{Z}[\{u_{i, \alpha}\}]$ with the following three properties:
\begin{enumerate}
\item If $F_1, \ldots, F_n \in \mathbb{C}[x_1, \ldots, x_n]$ are homogeneous polynomials of degrees $d_1$, $\ldots$, $d_n$ respectively, then the polynomials have a non-trivial common root in $\mathbb{C}^n$ exactly when $\res(F_1, \ldots, F_n) = 0$. Here, $\res(F_1, \ldots, F_n)$ is interpreted to mean substituting the coefficient of $\mathbf{x}^\alpha$ in $F_i$ for the variable $u_{i, \alpha}$ in $\res$.
\item $\res(x_1^{d_1}, \ldots, x_n^{d_n}) = 1$.
\item $\res$ is irreducible, even in $\mathbb{C}[\{u_{i,\alpha}\}]$.
\end{enumerate}
\end{theorem}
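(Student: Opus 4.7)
The plan is to construct $\res$ geometrically as the defining equation of an irreducible hypersurface, following the approach developed by Macaulay and refined in \cite{gkz94}. Let $N = \sum_{i=1}^n \binom{n+d_i-1}{d_i}$ count the coefficients $u_{i,\alpha}$, so a tuple $(F_1,\ldots,F_n)$ of homogeneous polynomials of the prescribed degrees is a point in $\mathbb{C}^N$. I would consider the incidence variety
$$
W = \{ (F_1,\ldots,F_n, [x]) \in \mathbb{C}^N \times \mathbb{P}^{n-1} : F_i(x) = 0 \text{ for all } i \}
$$
with its two projections $\pi_1 : W \to \mathbb{C}^N$ and $\pi_2 : W \to \mathbb{P}^{n-1}$, and build the resultant as (a scalar multiple of) a generator of the ideal of the image $Z = \pi_1(W)$.

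First I would analyze $W$ through $\pi_2$. For any fixed $[x]\in \mathbb{P}^{n-1}$, the condition $F_i(x)=0$ is a single nonzero linear condition on the coefficients of $F_i$, so the fiber $\pi_2^{-1}([x])$ is a linear subspace of codimension $n$ in $\mathbb{C}^N$. This exhibits $W$ as a vector bundle over $\mathbb{P}^{n-1}$ of rank $N-n$; in particular $W$ is smooth, irreducible, and of dimension exactly $N-1$. Since the second factor is proper, $\pi_1$ is closed, so $Z$ is a Zariski-closed irreducible subvariety of $\mathbb{C}^N$ of dimension at most $N-1$. The key step is to show that this bound is attained, i.e., that $\pi_1$ is generically finite. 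I would do this by exhibiting a single tuple $(F_1,\ldots,F_n)$ with only finitely many common projective zeros (for instance, by taking $F_i$ generic or using explicit Bezout-type configurations), which forces $\dim Z = N-1$ and hence $Z$ to be a hypersurface. Because $\mathbb{C}[u_{i,\alpha}]$ is a UFD, the prime ideal of $Z$ is principal, generated by some irreducible polynomial $R$, unique up to nonzero scalar; after taking a primitive representative this $R$ lies in $\mathbb{Z}[\{u_{i,\alpha}\}]$ and is unique up to sign.

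It remains to pin down the scalar and derive uniqueness. The tuple $(x_1^{d_1},\ldots,x_n^{d_n})$ has no common zero in $\mathbb{P}^{n-1}$ since a common zero would have every coordinate equal to zero, so this point lies outside $Z$ and $R$ evaluates to a nonzero integer on it; dividing by this value (and choosing the sign so the result is $+1$) normalizes $R$ to satisfy property (2), giving a polynomial $\res$ that satisfies (1), (2), and (3) simultaneously. For uniqueness, suppose $R'$ is any polynomial with the three listed properties. By (1) the vanishing locus of $R'$ equals $Z$, so by the Nullstellensatz $R'$ is a power of $\res$ times a scalar; irreducibility in (3) forces the first power, and (2) fixes the scalar as $1$.

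The main obstacle I expect is the generic-finiteness step, since everything downstream depends on it: one must verify that the projection from the incidence variety is not dimension-contracting, which requires producing explicit tuples of the prescribed multidegrees whose common projective zero set is finite (nonempty). The usual way, Bezout's theorem for complete intersections in $\mathbb{P}^{n-1}$, supplies exactly $\prod_i d_i$ common zeros counted with multiplicity for a generic tuple on $Z$, thereby also giving the total degree of $\res$ in the coefficients of each $F_i$. Ancillary but worth attention: one should either work on a projective compactification of $\mathbb{C}^N$ or justify closedness of $Z$ directly, and one should separately confirm that $W$ is nonempty over every $[x]$ (clear here, since the zero polynomial always works) before invoking the vector-bundle description.
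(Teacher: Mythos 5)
A point of comparison first: the paper does not prove this statement at all --- it is quoted as background, with the proof deferred to \cite{gkz94} --- so your proposal is being measured against the standard elimination-theoretic argument, which is indeed the route you sketch: the incidence variety $W \subset \mathbb{C}^N \times \mathbb{P}^{n-1}$, properness of the projection $\pi_1$, irreducibility and the dimension count $\dim W = N-1$, principality of the ideal of $Z = \pi_1(W)$ in the UFD $\mathbb{C}[\{u_{i,\alpha}\}]$, and the Nullstellensatz argument for uniqueness. Most of this is correct as you present it (the uniqueness paragraph in particular is fine once existence is granted).

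There is, however, a genuine gap at the normalization step. Having chosen a primitive irreducible $R \in \mathbb{Z}[\{u_{i,\alpha}\}]$ cutting out $Z$, you evaluate it at the tuple $(x_1^{d_1},\ldots,x_n^{d_n})$, obtain a nonzero integer $m$, and ``divide by this value'' to force property (2). But dividing by $m$ does not stay inside $\mathbb{Z}[\{u_{i,\alpha}\}]$ unless $m = \pm 1$: since $R$ is primitive, $R/m$ has integer coefficients only if $m$ divides every coefficient, i.e.\ $m = \pm 1$. So the existence of an \emph{integer} polynomial satisfying (1), (2), (3) simultaneously hinges on the nontrivial fact that the primitive integral defining equation of $Z$ takes the value $\pm 1$ at the monomial tuple; this is precisely the content of property (2) and requires a separate argument (for instance the Poisson product formula, an induction on $n$, or Macaulay's expression of $\res$ as a quotient of two integer determinants, evaluated at $(x_1^{d_1},\ldots,x_n^{d_n})$). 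Without it, your construction only yields a $\mathbb{Q}$-normalized resultant, or an integral one normalized up to an unknown integer scalar. Two smaller points: to get generic finiteness of $\pi_1$ you cannot take the $F_i$ ``generic'' --- a generic tuple has no common projective zero at all and lies off $Z$; you need a tuple \emph{in} $Z$ with a finite nonempty zero set, e.g.\ $(x_1^{d_1},\ldots,x_{n-1}^{d_{n-1}},x_1^{d_n})$, whose only common zero is $[0:\cdots:0:1]$, followed by upper semicontinuity of fiber dimension. And the claim that a primitive representative of the generator can be taken in $\mathbb{Z}[\{u_{i,\alpha}\}]$ deserves a sentence (the ideal of $W$ is generated by polynomials with integer coefficients and elimination preserves the field of definition), though that is easily repaired.
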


Next, the resultant is homogeneous in each group of coefficients.
\begin{theorem} \label{reshom} Fix degrees $d_1, \ldots, d_n$. Then for $i \in [n]$,  $\res$ is homogeneous in the variables $\{u_{i, \alpha}\}$ with degree $d_1 d_2 \ldots d_{i-1} d_{i+1} \ldots d_n$.
\end{theorem}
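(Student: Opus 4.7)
My plan is to split the proof into two parts: establishing that $\res$ is homogeneous in the block $\{u_{i,\alpha}\}$, and then pinning down the degree.

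\emph{Homogeneity.} Fix $i$ and consider the substitution $u_{i,\alpha} \mapsto t\, u_{i,\alpha}$, which corresponds to rescaling $F_i$ to $tF_i$. For any $t_0 \in \mathbb{C}^*$, the family $F_1,\ldots,t_0 F_i,\ldots,F_n$ has the same common non-trivial zero locus as $F_1,\ldots,F_n$, so by property (1) of Theorem \ref{resexist} the polynomial $R(t_0) := \res(F_1,\ldots,t_0 F_i,\ldots,F_n)$ (in the $u$-variables) vanishes on the hypersurface defined by $\res$. Irreducibility of $\res$ (property (3)) plus the fact that the substitution preserves total degree in the $u$'s forces $R(t_0) = \phi(t_0)\cdot \res$ for a non-zero scalar $\phi(t_0)$, and hence $R(t) = \phi(t)\cdot \res$ for some polynomial $\phi(t)\in\mathbb{C}[t]$. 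Writing $\res = \sum_k \res_k$ as the sum of its homogeneous components in $\{u_{i,\alpha}\}$, the substitution yields $R(t) = \sum_k t^k \res_k$; matching coefficients of $t^k$ forces $\res_k = \phi_k \res$ for each $k$. If two distinct $\phi_k$ were non-zero, this identity would demand that $\res$ simultaneously be homogeneous of two different degrees in $\{u_{i,\alpha}\}$, a contradiction. Hence $\res$ is homogeneous in $\{u_{i,\alpha}\}$ of some degree $D$.

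\emph{Computing $D$.} I would specialize $F_j = x_j^{d_j}$ for every $j\neq i$. The common projective zeros of these $F_j$'s consist of the single point $p = [0:\cdots:1:\cdots:0]$ (with the $1$ in position $i$), so the specialized system shares a non-trivial root with $F_i$ iff $F_i(p) = 0$, i.e., iff the coefficient $u^*$ of $x_i^{d_i}$ in $F_i$ vanishes. The specialized resultant, regarded as a polynomial in $\{u_{i,\alpha}\}$, therefore has zero locus equal to the hyperplane $\{u^* = 0\}$, so by Hilbert's Nullstellensatz it equals $c\cdot (u^*)^m$ for some constant $c$ and integer $m\geq 1$. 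The normalization in property (2) forces $c=1$. Since the specialization only affects the variables $u_{j,\alpha}$ with $j\neq i$, it does not change the degree of $\res$ in $\{u_{i,\alpha}\}$, so $D = m$.

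The main obstacle is identifying this exponent $m$ with $\prod_{j\neq i} d_j$. The cleanest route is to invoke the Poisson product formula from \cite{gkz94}, which expresses $\res$ (up to a factor independent of $F_i$) as $\prod_{\xi} F_i(\xi)$ over the common projective zeros $\xi$ of $\{F_j : j\neq i\}$; since a generic such system has exactly $\prod_{j\neq i} d_j$ zeros by B\'ezout, and each factor $F_i(\xi)$ is linear in $\{u_{i,\alpha}\}$, the degree in the coefficients of $F_i$ is immediate. Alternatively one can perturb $x_j^{d_j}$ to a generic form of degree $d_j$, count roots by B\'ezout's theorem, then pass to the limit to equate $m$ with the intersection multiplicity $\prod_{j\neq i} d_j$ of $\{x_j^{d_j}\}_{j\neq i}$ at the single point $p$. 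I anticipate that the bookkeeping in this multiplicity/perturbation step is the genuine work in the proof, and I would lean on the GKZ reference to shortcut it.
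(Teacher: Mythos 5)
First, a caveat about the comparison you asked for: the paper does not prove Theorem \ref{reshom} at all --- it is reproduced as a known fact and the proof is explicitly deferred to \cite{gkz94} --- so there is no in-paper argument to measure your proposal against; I can only assess it on its own terms.

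Your homogeneity half is correct: rescaling via $u_{i,\alpha}\mapsto t\,u_{i,\alpha}$ preserves the common-root locus, property (1) and the irreducibility in property (3) of Theorem \ref{resexist} force $R(t)=\phi(t)\cdot\res$, and comparing homogeneous components in the block $\{u_{i,\alpha}\}$ gives block-homogeneity of some degree $D$. (One repair: your later claim that specializing $F_j=x_j^{d_j}$ for $j\neq i$ ``does not change the degree of $\res$ in $\{u_{i,\alpha}\}$'' is false for general polynomials, since specializing other variables can annihilate the top-degree part; it is legitimate here only because block-homogeneity has already been established, so the specialized resultant is either zero or homogeneous of degree exactly $D$ --- say so.) The genuine gap is in the second half: the specialization argument shows only that the specialized resultant is $(u^*)^m$ with $m\geq 1$, and gives no handle on the exponent $m$, which is precisely the quantitative content of the theorem. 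You then discharge the identification $m=\prod_{j\neq i}d_j$ by invoking the Poisson product formula, or a sketched perturbation/intersection-multiplicity limit, neither of which you carry out --- and the Poisson route risks circularity, since in the standard developments (e.g., \cite{clo98}, Ch.~3) that formula is derived after, and using, the basic homogeneity/degree properties of the resultant, while \cite{gkz94} proves the degree statement directly via a projection-degree/B\'ezout count on the incidence variety (essentially the count of $\prod_{j\neq i}d_j$ common zeros of generic $F_j$, $j\neq i$, that you gesture at). As written, you have proved homogeneity honestly but obtained the actual degree only by citing a result at least as deep as the one to be proved; closing the gap requires actually establishing the Poisson-type factorization or running the generic-fiber/B\'ezout argument yourself.
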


Since the equations $\cA \bx^{k-1}$ are a collection of $n$ homogeneous polynomials in $n$ variables, we can use them as ``input'' into the resultant, which leads to the following.

\begin{definition} \label{hyperdet} The \emph{symmetric hyperdeterminant} of $\cA$, denoted $\det(\cA)$, is the resultant of the polynomials $\cA\mathbf{x}^{k-1}$.
\end{definition}

Let $\cI$, the identity hypermatrix, have entries  $\displaystyle \begin{cases} 1 & \text{if } i_1=i_2=\ldots = i_k \\ 0 & \text{otherwise.} \end{cases} $
\begin{definition} \label{charpoly} Let $\lambda$ be an indeterminate. The \emph{characteristic polynomial} $\phi_{\cA}(\lambda)$ of a hypermatrix $\cA$ is $\phi_{\cA}(\lambda) = \det(\lambda \cI-\cA)$. \end{definition}

Qi, in \cite{qi05}, determined many properties of the symmetric hyperdeterminant and the characteristic polynomial, the most crucial being that the roots of the characteristic polynomial are exactly the eigenvalues of $\cA$. From this, we take our generalization of the spectrum of a matrix.

\begin{definition} The \emph{spectrum} of a symmetric hypermatrix $\cA$, denoted $\spec(\cA)$, is the set (or multiset, depending on the context) of roots of $\phi_\cA(\lambda).$
\end{definition}

\section{General Hypergraph Spectra}

In the sequel, we employ standard definitions and notation from hypergraph theory; see, e.g., \cite{ber89}.  A {\it hypergraph} $H$ is a pair $(V,E)$, where $E \subseteq \mathcal{P}(V)$.  The elements of $V = V(H)$ are referred to as {\it vertices} and the elements of $E = E(H)$ are called {\it edges}. A hypergraph $H$ is said to be {\it $k$-uniform} for an integer $k \geq 2$ if, for all $e \in E(H)$, $|e| = k$. We will often use the term \emph{$k$-graph} in place of  \emph{$k$-uniform hypergraph}. Given two hypergraphs $H = (V,E)$ and $H^\prime = (V^\prime, E^\prime)$, if $V^\prime \subseteq V$ and $E^\prime \subseteq E$, then $H^\prime$ is said to be a {\it subgraph} of $H$.  A set of vertices $S \subset V(H)$ is said to {\it induce} the subgraph $H[S] = (S,E \cap \mathcal{P}(S))$.  A $k$-uniform {\it multihypergraph} $H$ is a pair $(V,E)$, where $E$ is a multiset of subsets of $V$ of cardinality $k$.  Given a hypergraph $H = (V,E)$ and a multihypergraph $H^\prime = (V^\prime,E^\prime)$, if $(V^\prime,E^{\prime \prime})$ is a subgraph of $H$, where $E^{\prime \prime}$ is the {\em set} of elements of $E^\prime$, then $H^\prime$ is said to be a {\em multi-subgraph} of $H$.

\begin{definition} \label{adjmat} For a $k$-graph $H$ on $n$ labeled vertices, the (normalized) \emph{adjacency hypermatrix}
 $\cA_H$ is the order $k$ dimension $n$ hypermatrix with entries
$$
a_{i_1, i_2, \ldots, i_k} = \frac{1}{(k-1)!}\begin{cases}  1& \text{ if } \{i_1, i_2, \ldots i_k\} \in E(H) \\
                                               0 & \text{ otherwise.} \end{cases}
$$
\end{definition}

When dealing with the spectrum of the adjacency hypermatrix of a hypergraph, we often suppress the hypermatrix, writing $\spec(H)$ for $\spec(\cA_H)$,  $F_H(\bx)$ for $F_{\cA_H}(\bx)$, etc.

With Definition \ref{adjmat}, the function $F_{H}(\bx)$  and the eigenvalue equations (\ref{eigeq}) take on a particularly nice form. For an edge $e=\{i_1, i_2, \ldots, i_r\}$ of an $r$-graph, let $x^e$ denote the monomial $x_{i_1}x_{i_2} \ldots x_{i_r}$. Recall that the \emph{link}  of a vertex $i$ in $H$, denoted $H(i)$, is the $(k-1)$-graph whose edges are obtained by removing vertex $i$ from each edge of $H$ containing $i$. That is, $E(H(i)) = \{ e \setminus \{i\} \, | \, i \in e \in E(H) \}$ and $V(H(i)) = \bigcup E(H(i))$.  Then
$$
F_{H}(\mathbf{x}) = \sum_{e \in H} kx^e,
$$
and the eigenvalue equations (\ref{eigeq}) become
\begin{equation} \label{eq:linkform}
\sum_{e\in H(i)} x^e = \lambda x_i^{k-1},
\end{equation}
for all $i \in V(H)$.  The normalization factor $\frac{1}{(k-1)!}$  in Definition \ref{adjmat} is included essentially for aesthetic reasons. It could easily be absorbed into $\lambda$ in the eigenvalue equations without altering any of the calculations. Normalization allows the adjacency hypermatrix to faithfully generalize the adjacency matrix of a graph while removing a factor of $(k-1)!$ that would otherwise make an appearance in some of the results below.

Using the definitions given Section 2, a number of results from basic spectral graph theory can be generalized to the $k$-graph case in a natural way. Indeed, some results need only slight modifications of their standard proofs.  Others -- particularly those that give results about multiplicities -- require the use of new techniques.

\begin{theorem} \label{disjoint}  Let $H$ be a $k$-graph that is the disjoint union of hypergraphs $H_1$ and $H_2$. Then as sets, $\spec(H) =\spec(H_1)\cup \spec(H_2)$.  Considered as multisets, an eigenvalue $\lambda$ with multiplicity $m$ in $\spec(H_1)$ contributes $\lambda$ to $\spec(H)$ with multiplicity $m(k-1)^{|H_2|}.$
\end{theorem}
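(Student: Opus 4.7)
The plan is to establish the polynomial identity
\[
\phi_H(\lambda) \;=\; \phi_{H_1}(\lambda)^{(k-1)^{n_2}}\,\phi_{H_2}(\lambda)^{(k-1)^{n_1}},
\]
where $n_i = |V(H_i)|$, after which both conclusions of the theorem follow by reading off roots with their multiplicities. The starting observation is that because $H$ is a disjoint union, the $j$-th eigenvalue polynomial $\lambda x_j^{k-1} - \sum_{e\in H(j)} x^e$ involves only the variables indexed by the component containing $j$. Labeling these polynomials as $P_1,\ldots,P_{n_1}$ (on the $V(H_1)$-variables) and $Q_1,\ldots,Q_{n_2}$ (on the $V(H_2)$-variables), $\phi_H(\lambda)$ is the resultant of a block-diagonal system of $n_1+n_2$ homogeneous forms of degree $k-1$.

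The heart of the argument is the block-factorization identity
\[
\res(P_1,\ldots,P_{n_1},Q_1,\ldots,Q_{n_2}) \;=\; \res(P_1,\ldots,P_{n_1})^{(k-1)^{n_2}}\,\res(Q_1,\ldots,Q_{n_2})^{(k-1)^{n_1}},
\]
which I would derive from the three defining properties in Theorem~\ref{resexist}. Because every $Q_j$ is homogeneous of positive degree in only the $V(H_2)$-variables, any nontrivial common zero of the $P_i$'s extends (by zero on the $V(H_2)$-block) to a nontrivial common zero of the full system, and symmetrically; conversely, the nonzero coordinate block of any common zero of the full system yields a nontrivial common zero of one subsystem. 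Viewing each resultant as a polynomial in the coefficient variables of the $P_i$'s and $Q_j$'s, the full resultant therefore vanishes exactly where the product $\res(P\ldots)\cdot\res(Q\ldots)$ does. Since the two sub-resultants are irreducible by Theorem~\ref{resexist}(3) and coprime (they involve disjoint sets of coefficient variables), unique factorization forces $\res(P,Q) = c\cdot\res(P)^a\res(Q)^b$ for a nonzero constant $c$ and positive integers $a,b$.

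The exponents are then pinned down by Theorem~\ref{reshom}: the full resultant has degree $(k-1)^{n_1+n_2-1}$ in the coefficients of any single $P_i$, while $\res(P_1,\ldots,P_{n_1})$ has degree $(k-1)^{n_1-1}$ in those same coefficients and $\res(Q_1,\ldots,Q_{n_2})$ does not involve them at all, forcing $a = (k-1)^{n_2}$ and symmetrically $b = (k-1)^{n_1}$. The constant $c$ is fixed to $1$ by specializing $P_i \mapsto x_i^{k-1}$ and $Q_j \mapsto y_j^{k-1}$ and invoking $\res(x_1^{k-1},\ldots,x_n^{k-1}) = 1$ from Theorem~\ref{resexist}(2). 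Substituting our specific $P_i, Q_j$ yields the desired factorization of $\phi_H(\lambda)$, and comparing root multiplicities on both sides finishes the proof. The main obstacle is the block-factorization identity itself; once the form $c\cdot\res(P)^a\res(Q)^b$ is established via the vanishing-locus plus coprime-irreducibility argument, the degree and normalization bookkeeping is routine.
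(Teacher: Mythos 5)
Your proposal is correct and follows essentially the same route as the paper: the paper first proves the block-factorization identity for resultants of two disjoint generic systems (its Lemma on $\res(F_1,\ldots,F_n,G_1,\ldots,G_m)$) via exactly your vanishing-locus, irreducibility, homogeneity-degree, and normalization steps, and then applies it to the eigenvalue polynomials to factor $\phi_H$. The only cosmetic difference is that you state the identity for forms all of degree $k-1$, whereas the paper proves it for arbitrary degrees $d_i$, $\delta_j$.
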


We first prove a more general lemma about the resultant of a system of polynomials which can be viewed as the union of two disjoint systems. The proof of the theorem is then a simple application of this lemma.

\begin{lemma} \label{djres} Let $F_1, F_2, \ldots, F_{n} \in \mathbb{C}[x_1, \ldots, x_n]$ be  homogeneous polynomials of degrees $d_1,\ldots, d_n,$ and let $ G_1, G_2, \ldots, G_m \in \mathbb{C}[y_1, \ldots, y_m]$  be homogeneous polynomials of degrees $\delta_1, \ldots , \delta_m.$  Then
$$
\res(F_1, \ldots, F_n, G_1, \ldots G_m) = \res(F_1, \ldots, F_n)^{\prod_i \delta_i} \res(G_1, \ldots G_m)^{\prod_i d_i}.
$$
\end{lemma}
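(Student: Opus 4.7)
The plan is to show that both sides have the same irreducible factorization by applying the Nullstellensatz to their common vanishing locus, then pin down the exponents and overall constant using the homogeneity and normalization properties of the resultant guaranteed by Theorems \ref{resexist} and \ref{reshom}. Write $R = \res(F_1,\ldots,F_n,G_1,\ldots,G_m)$ and $P = \res(F_1,\ldots,F_n)^{\prod_l \delta_l}\res(G_1,\ldots,G_m)^{\prod_l d_l}$. Both $R$ and $P$ live in the polynomial ring over $\mathbb{Z}$ generated by the coefficients $\{u_{i,\alpha}\}$ of the $F_i$ (monomials in $x_1,\ldots,x_n$) and the coefficients $\{v_{j,\beta}\}$ of the $G_j$ (monomials in $y_1,\ldots,y_m$); note that $R$ arises from the generic resultant for $n+m$ forms in $n+m$ variables by specializing to zero the coefficients of every mixed $xy$-monomial.

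First I would show that $V(R) = V(P)$ as subvarieties of the combined coefficient space. By Theorem \ref{resexist}(1), $V(R)$ is the set of coefficient tuples for which the combined system has a nontrivial common zero $(x^*,y^*) \in \mathbb{C}^{n+m}\setminus\{0\}$. Since each $F_i$ involves only the $x$'s and each $G_j$ is a positive-degree form in the $y$'s, such a tuple exists if and only if either the $F$-system has a nontrivial common zero in $\mathbb{C}^n$ (take $(x^*,0)$) or the $G$-system has one in $\mathbb{C}^m$ (take $(0,y^*)$); both directions use that $G_j(0) = 0$ and $F_i(x) = F_i(x,y)$ by homogeneity and independence, respectively. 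The latter locus is exactly the zero set of $\res(F_1,\ldots,F_n)\cdot\res(G_1,\ldots,G_m)$, which is $V(P)$.

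Next I would apply the Nullstellensatz. From $V(R) = V(P)$ we get $\sqrt{(R)} = \sqrt{(P)}$ in the UFD $\mathbb{C}[\{u_{i,\alpha}\},\{v_{j,\beta}\}]$. By Theorem \ref{resexist}(3), $\res(F_1,\ldots,F_n)$ and $\res(G_1,\ldots,G_m)$ are each irreducible; viewed in the combined ring they remain irreducible and are coprime, because they involve disjoint sets of variables. Hence $\sqrt{(P)}$ is the principal ideal generated by $\res(F)\cdot\res(G)$, and so every irreducible factor of $R$ is an associate of one of these two. Combined with the fact that both $\res(F)$ and $\res(G)$ divide $R$ (since each lies in $\sqrt{(R)}$ and is irreducible), we conclude $R = c\cdot\res(F)^{a}\,\res(G)^{b}$ for some positive integers $a,b$ and nonzero $c \in \mathbb{C}$.

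To determine $a$, $b$, and $c$, I would match degrees via Theorem \ref{reshom} and then use property (2). The degree of $R$ in the coefficients $\{u_{1,\alpha}\}$ is $\bigl(\prod_{j\geq 2} d_j\bigr)\cdot\prod_l \delta_l$, while the degree of $\res(F)^{a}\res(G)^{b}$ in those same variables is $a\prod_{j\geq 2} d_j$ (the $\res(G)$ factor contributes nothing). This forces $a = \prod_l \delta_l$, and by symmetry $b = \prod_l d_l$. Finally, specializing to $F_i = x_i^{d_i}$ and $G_j = y_j^{\delta_j}$ and invoking Theorem \ref{resexist}(2) on both sides gives $1 = c\cdot 1^{a}\cdot 1^{b}$, hence $c=1$. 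The main obstacle is the irreducibility subtlety: while the fully generic resultant is irreducible, the specialization $R$ obtained by zeroing out the mixed-monomial coefficients is generally reducible, so one cannot read off the factorization directly; it is the Nullstellensatz that bridges the geometric equality to an algebraic one, after which the bookkeeping with degrees and normalization is routine.
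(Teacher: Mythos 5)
Your proposal is correct and follows essentially the same route as the paper's proof: compare the vanishing loci of the combined resultant and of $\res(F_1,\ldots,F_n)\cdot\res(G_1,\ldots,G_m)$ over generic coefficients, conclude via irreducibility that the combined resultant is $c\,\res(F)^a\res(G)^b$, then fix $a,b$ by the homogeneity degrees of Theorem \ref{reshom} and $c=1$ by the normalization in Theorem \ref{resexist}. One small slip: neither $\res(F)$ nor $\res(G)$ individually lies in $\sqrt{(R)}$ (only their product does), but the divisibility you want follows anyway since $\sqrt{(R)}=\sqrt{(P)}$ is generated by $\res(F)\res(G)$, so both irreducibles occur among the factors of $R$.
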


\begin{proof} Instead of considering particular polynomials, we work with ``generic'' polynomials. We consider each $F_i$
as having a distinct variable $a_{i,\alpha}$ as a coefficient for each monomial $x^\alpha$ of degree $d_i$
in the $x$ variables (and the corresponding coefficient variables $b_{i, \beta}$ for
$G_i, \delta_i,$ and the $y$ variables).

Then by Theorem \ref{resexist} we can consider $\res(F_1, \ldots, F_n)$ as an irreducible integer polynomial in
$\mathbb{C}\left[\{a_{i, \alpha}\}\right]$ (which is also irreducible in $\mathbb{C}\left[\{a_{i, \alpha}\}, \{b_{i, \beta}\}\right]$).
Similarly $\res(G_1, \ldots, G_m)$ is an irreducible polynomial in the $b_{i,\beta}$ variables.

Now consider $\res(F_1, \ldots, F_n)\res(G_1, \ldots, G_m).$  It is a polynomial in all of the coefficient variables, and if we consider
$\mathbb{C}^{n+m} = \mathbb{C}^n\times\mathbb{C}^m$, this polynomial takes value zero precisely when at least one of the systems $\{F_i\}, \{G_i\}$
has a non-trivial solution in its respective space. Thus, any zero of this polynomial is a setting of the coefficient variables $\{a_{i, \alpha}, b_{j, \beta}\}$ so that at least one of the two systems described by the coefficients has a non-trivial solution.

Similarly,  $\res(F_1, \ldots, F_n, G_1, \ldots, G_m)$ is an integer polynomial in all of the coefficient variables, which takes value zero when
the entire system has a non-trivial solution in $\mathbb{C}^{n+m}$. Consequently, this resultant taking value zero gives that at least one of the systems $\{F_i\}, \{G_i\}$ has a non-trivial solution.  Notice, however, that if the system $\{F_i\}$ has a
nontrivial solution, then setting all of the $y$ variables to zero gives a nontrivial solution to the entire system.
Hence $\res(F_1, \ldots, F_n, G_1, \ldots, G_m)$ takes value zero precisely for assignments of the coefficient variables where at least one of the systems $\{F_i\}, \{G_i\}$ described by these coefficients has a non-trivial solution in its respective space.

Since these two polynomials have exactly the same zeroes, they can only  differ (up to a unit) in the multiplicities of their
irreducible factors. Since each of $\res(F_1, \ldots, F_n),$ and $\res(G_1, \ldots, G_m)$ are already irreducible, we have that for some
integers $D, \Delta >0$, and complex number $c\neq 0$,
$$
\res(F_1, \ldots, F_n, G_1, \ldots, G_m) = c\res(F_1, \ldots, F_n)^\Delta\res(G_1, \ldots, G_m)^D.
$$

Theorem \ref{reshom} gives us that $\res(F_1, \ldots \, F_n)$ is homogeneous of
degree $d_2 \ldots d_n$ in the coefficients of $F_1$, while
$\res(F_1, \ldots, F_n, G_1, \ldots, G_m)$ is homogeneous of degree
$d_2 \ldots d_n \delta_1 \ldots \delta_m$ in the coefficients
of $F_1$.  Since $\res(G_1, \ldots, G_m)$ does not involve any of the $a_{i, \alpha}$,
we may conclude that $\Delta = \prod_j \delta_j$. Similarly, we have that $D = \prod_j d_j$.  Finally, property (b) of Theorem \ref{resexist} implies that $c=1$.
\end{proof}

\begin{proof}[Proof of Theorem \ref{disjoint}]  Let $H, H_1,$ and $H_2$ be as in the statement of the theorem, and let $\phi_H(\lambda)$, $\phi_{H_1}(\lambda)$, $\phi_{H_2}(\lambda)$ denote their respective characteristic polynomials. To prove the theorem, it suffices to show
$$
\phi_H(\lambda) = \phi_{H_1}(\lambda)^{(k-1)^{|H_2|}} \phi_{H_2}(\lambda)^{(k-1)^{|H_1|}}.
$$

Since $H$ is the disjoint union of two hypergraphs, the polynomials in (\ref{eq:linkform}) can be partitioned into two sets $\Sigma_1$ and $\Sigma_2$, where $\Sigma_i$ uses only variables corresponding to vertices of $H_i$, $i=1,2$.  Noting that the degree of each of these polynomials is $k-1$, and that the characteristic polynomial $\phi_H$ is the resultant of the entire system $\Sigma_1 \cup \Sigma_2$, Lemma \ref{djres} gives the desired result.
\end{proof}

\subsection{Properties of the Largest Eigenvalue}

We derive some properties of the eigenvalue of a $k$-graph $H$ with largest modulus, which we denote $\lambda_\text{max} = \lambda_\text{max}(H)$.  Although {\em a priori} there may be many such eigenvalues with the same modulus, we show below that there is always a real, positive one\footnote[1]{Unless the hypergraph has no edges, in which case all of the eigenvalues are trivially zero.}.
In order to proceed, we begin with a few facts about eigenvalues associated with positive eigenvectors.

Let $H$ be a $k$-graph with $n$ vertices, and let
$$
S_{\geq 0} = \{\bx\in \mathbb{R}^n \, | \, \sum_{i=1}^nx_i^k = 1 \text{ and } x_i\geq 0 \,\text{ for } \, i \in [n]  \}.
$$
For a vector $\bx$, we call the set $\supp(\bx)$ of all indices of non-zero coordinates of $\bx$ the \emph{support} of the vector.  For ease of notation, throughout the sequel we identify coordinate indices of vectors with the corresponding vertices of the hypergraph under consideration.

\begin{lemma} If $\bv \in S_{\geq 0}$ maximizes $F_H(\bx)$ on $S_{\geq 0}$, then $\supp(\bv)$ induces some collection of connected components of $H$. \end{lemma}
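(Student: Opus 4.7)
The plan is to argue by contradiction via a carefully chosen perturbation on $S_{\geq 0}$. Suppose $\bv$ is a maximizer but $S := \supp(\bv)$ is not a union of vertex sets of connected components of $H$. Then there must be an edge $e \in E(H)$ that straddles the boundary of $S$, i.e., both $S_e := e \cap S$ and $T_e := e \setminus S$ are nonempty; in particular, $1 \leq |T_e| \leq k-1$. My aim is to produce a perturbation $\bv^\epsilon \in S_{\geq 0}$ with $F_H(\bv^\epsilon) > F_H(\bv)$ for all sufficiently small $\epsilon > 0$, contradicting maximality. This yields the stronger closure statement that every edge of $H$ is either contained in $S$ or disjoint from $S$, which is easily seen to be equivalent to $S$ inducing a union of connected components (walk along a path inside any component whose vertex lies in $S$).

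The perturbation I would use ``turns on'' the previously-zero coordinates of $T_e$ while uniformly rescaling the support coordinates to maintain normalization: set $v^\epsilon_j = \epsilon$ for $j \in T_e$, $v^\epsilon_i = c(\epsilon) v_i$ for $i \in S$ with $c(\epsilon) := (1 - |T_e|\epsilon^k)^{1/k}$, and $v^\epsilon_l = 0$ otherwise. Expanding $F_H(\bv^\epsilon)$ by classifying each edge $f \in E(H)$ according to how it meets $S$ and $T_e$, the edges $f \subseteq S$ aggregate to $c(\epsilon)^k F_H(\bv)$, producing a loss $(c(\epsilon)^k - 1) F_H(\bv)$ of order $\epsilon^k$, while the edges $f \subseteq S \cup T_e$ that meet $T_e$ (a class containing $e$) contribute a positive gain whose lowest-order term is $\epsilon^q$, where $q := \min\{|f \cap T_e| : f \subseteq S \cup T_e,\ f \cap T_e \neq \emptyset\} \leq |T_e| \leq k-1$. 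Edges with any vertex in $V \setminus (S \cup T_e)$ contribute zero.

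Since $q < k$, the $\epsilon^q$ gain dominates the $\epsilon^k$ loss for small $\epsilon$, producing the contradiction. The feature making this work is that any edge $f$ attaining the minimum $q$ satisfies $|f \cap S| = k - q \geq 1$, so $\prod_{i \in f \cap S} v_i > 0$ because every coordinate on $S$ is strictly positive; hence the coefficient of $\epsilon^q$ is a strictly positive constant (note the edge $e$ itself guarantees the sum over minimizing $f$ is nonempty).

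The step I expect to be the crux is choosing to activate the entire set $T_e$ at once rather than a single boundary vertex. A naive one-vertex shift detects only edges having exactly one vertex outside $S$, and such edges need not exist even when $S$ fails to induce a union of components; the simultaneous activation forces the straddling edge $e$ itself to contribute at order $\epsilon^{|T_e|} \leq \epsilon^{k-1}$, which is strictly below the normalization loss $\epsilon^k$ no matter how $H$'s other edges are arranged. The remaining verifications (that $\bv^\epsilon \in S_{\geq 0}$ and that cross-terms between the two orders do not spoil the inequality) are routine once the leading-order bookkeeping is in place.
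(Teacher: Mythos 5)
Your proposal is correct and takes essentially the same approach as the paper: both argue by contradiction, perturbing the maximizer by placing small mass on previously-zero coordinates of a straddling edge while rescaling the support to stay in $S_{\geq 0}$, and both win because the gain from that edge, of order $\epsilon^{|e\setminus\supp(\bv)|}\leq\epsilon^{k-1}$, dominates the renormalization loss of order $\epsilon^{k}$. The only differences are cosmetic: the paper activates all of $V(H)\setminus\supp(\bv)$ rather than just $e\setminus\supp(\bv)$, and phrases the contradiction as $F_H(\bv)\geq C(1-s^k)^{-1/k}\rightarrow\infty$ as $s\rightarrow 1$ instead of your direct leading-order comparison.
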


\begin{proof} Suppose, by way of contradiction, that $\bv$ maximizes $F_H(\bx)$, but the support of $\bv$ is not the vertex set of a collection of connected components of $H$.  In particular, $V(H) \neq \supp(\bv)$.  Define $I = V(H) \setminus \supp(\bv)$, and partition the edges of $H$ as follows.  Let $\cF_1$ be the set of edges of $H$ using only vertices of $\supp(\bv)$, let $\cF_2$ be the set of edges of $H$ using only the vertices of $I$, and let $\cF_3 = E(H) \setminus (\cF_1 \cup \cF_2)$. Since the support of $\bv$ does not induce a collection of components, we have $\cF_3 \neq \emptyset$. Let $\hat{e} \in \cF_3$ be one such edge.

Let $v_{\text{min}}$ be the smallest non-zero entry of $\bv$. For $0 < s <1,$ define $\delta = \left(\frac{1-s^k}{|I|}\right)^{1/k}.$ Note that $\delta>0, $ but tends to zero as $s$ tends to 1. Hence we can find $s_0$ with $1/2<s_0<1$ so that $\delta \leq v_{\text{min}}/2$ for any $s$ with $s_0<s <1$. Thus for these $s$, we have \begin{equation} \label{coordbound} \delta \leq v_{\text{min}}/2 \leq s v_i\end{equation}  for any non-zero entry $v_i$ of $\bv.$

Next, let  $\bz\in \mathbb{R}^n$ be the vector whose support is $I$, and whose non-zero entries are each $1$.  Define the vector $\by = s \bv + \delta \bz.$
A quick verification shows that
\begin{align*} \sum_{i=1}^n y_i^k & = \sum_{i\in [n]\setminus I} y_i^k + \sum_{i\in I}y_i^k \\
& = \sum_{i\in [n]\setminus I} (s v_i)^k + \sum_{i\in I}\delta^k \\
& = s^k \sum_{i=1}^n v_i^k + \sum_{i\in I} \frac{1-s^k}{|I|} \\
& = s^k +(1-s^k) = 1,
\end{align*}
so that  $\by \in S_{\geq 0}$. Note that the support of $\by$ is all of $H$.

As $\bv$ maximizes $F_H(\bx)$, we have that

\begin{align*}
F_H(\bv)  & \geq F_H(\by) \\
& = k\sum_{e\in H}y^e \\
& = k\left( \sum_{e\in \cF_1} y^e + \sum_{e\in \cF_2}y^e + \sum_{e\in \cF_3}y^e\right) \\
& \geq k\left( \sum_{e\in \cF_1}s^k v^e\right) + ky^{\hat{e}} \\
& = s^kF_H(\bv) +ky^{\hat{e}} \end{align*}

Solving this inequality for $F_H(\bv),$ we see

\begin{equation} \label{topeigcont} F_H(\bv)\geq k (1-s^k)^{-1} y^{\hat{e}} \end{equation}

If we write $\hat{e} = \{i_1, i_2, \ldots, i_k\}$, then  $y^{\hat{e}} = y_{i_1}y_{i_2}\ldots y_{i_k}.$ By the definition of $\by$, we have $$y_{i_j} = \begin{cases} \delta & \text{ if } i_j\in I \\ s v_{i_j} & \text{ otherwise. } \end{cases}$$ We know that $\hat{e}$ uses vertices of $I$ and $V(H)\setminus I$. In particular, it has at least one factor $s v_{i_j}.$ Using (\ref{coordbound}), we may write $s v_{i_j} \geq v_{\text{min}}/2$ and bound all others factors from below by $\delta$. Thus we have  $$y^{\hat{e}} \geq \delta^{k-1} \frac{v_{\text{min}}}{2}.$$ Substituting this into (\ref{topeigcont}), we see
\begin{align*}
F_H(\bv) & \geq \frac{kv_{\text{min}}}{2(1-s^k)} \delta^{k-1} \\
& = \frac{kv_{\text{min}}}{2(1-s^k)} \left( \frac{1-s^k}{|I|}\right)^{\frac{(k-1)}{k}} \\
& =C (1-s^k)^{-1/k}
\end{align*}
where $C = \frac{kv_{\text{min}}}{2|I|^{(k-1)/k}}$ is a constant.  Note that the left side is a fixed value, while the right side becomes arbitrarily large as $s$ tends to 1. Therefore, we have a contradiction. \end{proof}

From this lemma we can derive the following useful corollary.

\begin{corollary} \label{realpair} If $H$ is a connected $k$-graph, then it has a strictly positive eigenpair $(\lambda, \bv)$ where $\lambda$  is the maximum value of $F_H(\bx)$ on $S_{\geq 0}.$   \end{corollary}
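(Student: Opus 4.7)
The plan is to combine a compactness argument with the method of Lagrange multipliers, using the preceding lemma as the essential ingredient. First I would observe that $S_{\geq 0}$ is compact and $F_H$ is continuous, so $F_H$ attains its maximum on $S_{\geq 0}$ at some point $\bv$; set $\lambda := F_H(\bv)$.

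Applying the lemma, $\supp(\bv)$ must induce a disjoint union of connected components of $H$. Since $\bv$ lies on the sphere $\sum v_i^k = 1$ it is nonzero, and so its support is nonempty. The hypothesis that $H$ is connected then forces $\supp(\bv) = V(H)$; that is, $v_i > 0$ for every $i \in V(H)$.

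It remains to check that $(\lambda, \bv)$ satisfies the eigenvalue equation (\ref{eigeq}). I would apply Lagrange multipliers to the constrained problem with $g(\bx) := \sum_i x_i^k - 1 = 0$. Using the symmetry of $\cA_H$, one computes $\partial F_H / \partial x_j = k \sum_{i_2, \ldots, i_k} a_{j i_2 \ldots i_k} x_{i_2} \cdots x_{i_k}$ and $\partial g / \partial x_j = k x_j^{k-1}$. Since $\bv$ is strictly positive, it lies on the smooth manifold $\{g = 0\}$ away from the coordinate hyperplanes that bound $S_{\geq 0}$, and $\nabla g(\bv) \neq 0$, so there exists $\mu \in \mathbb{R}$ with
\begin{equation*}
\sum_{i_2, \ldots, i_k} a_{j i_2 \ldots i_k} v_{i_2} \cdots v_{i_k} = \mu\, v_j^{k-1}
\end{equation*}
for every $j$, which is exactly (\ref{eigeq}). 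To identify $\mu$ with $\lambda$, I would multiply the $j$-th equation by $v_j$ and sum over $j$: the left side becomes $F_H(\bv) = \lambda$ and the right side becomes $\mu \sum_j v_j^k = \mu$, giving $\mu = \lambda$.

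The substantive work has already been handled by the preceding lemma, which is precisely what prevents the maximizer from escaping to the boundary of $S_{\geq 0}$, where the Lagrange multiplier setup would require more care. Given strict positivity of $\bv$, the remaining steps are routine, and I do not anticipate any genuine obstacle beyond confirming that $\bv$ is a regular point of the constraint, which is immediate from $v_i > 0$.
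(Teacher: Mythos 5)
Your proposal is correct and takes essentially the same route as the paper: a compactness argument plus the preceding lemma to force a full-support maximizer, followed by the observation that constrained critical points of $F_H$ on the sphere $\sum_i x_i^k = 1$ are exactly eigenvectors --- the Lagrange multiplier computation you write out explicitly is precisely what the paper invokes implicitly, including the identification $\mu = \lambda = F_H(\bv)$. The only small addition worth making is the closing remark that $\lambda = F_H(\bv) > 0$ since $\bv$ is strictly positive and $H$ has an edge, which is what makes the eigenpair strictly positive as claimed.
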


\begin{proof} Since $H$ is connected, the lemma tells us that the maximum of $F_H(\bx)$ is obtained by a vector $\bv$ with full support. So  $F_H(\bx)$ is maximized on the interior of $S_{\geq 0}.$ Since this set is compact, and $F_H(\bx)$ achieves its maximum in the interior, $\bv$ must be a critical point of $F_H(\bx)$.  The critical points of $F_H(\bx)$ are exactly the eigenvectors of $H$, so the maximum happens at an eigenvector. However, on $S_{\geq 0}$, if $(\lambda, \bv)$ is an eigenpair, it is easy to see that $F_H(\bv) = \lambda$.  Furthermore, since all entries of $\bv$ are positive, $\lambda = F_H(\bv)>0$.    \end{proof}

Once we have the existence of a strictly positive eigenpair, the methods used in \cite{cpz08} allow us show that the eigenvalue from Corollary \ref{realpair} is actually the largest eigenvalue.  We include an adapted proof for completeness.

\begin{lemma} \label{largepos} If $(\lambda, \bv)$ is a strictly positive eigenpair, and $(\mu, \by)$ are non-negative (i.e., $\by\neq 0$) with the property that $\sum_{e\in H(i)}y^{e}\geq \mu y_i^{k-1}$ for each $i$,  then $\mu \leq \lambda$. \end{lemma}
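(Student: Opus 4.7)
The plan is to compare $\by$ to $\bv$ via an optimal scalar multiple and then exploit monotonicity of the monomials $x^e$ in the non-negative orthant. Concretely, since $\bv$ is strictly positive and $\by$ is non-negative and non-zero, the ratio
\[
t = \max_{i \in V(H)} \frac{y_i}{v_i}
\]
is well-defined and strictly positive. Let $j$ be an index achieving this maximum, so that $y_j = t v_j > 0$ and $y_i \leq t v_i$ for every $i$.

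The next step is to apply the hypothesis on $(\mu,\by)$ at coordinate $j$, namely
\[
\mu y_j^{k-1} \leq \sum_{e \in H(j)} y^e,
\]
and then bound the right-hand side using the coordinate-wise inequality $y_i \leq t v_i$. Since every edge $e \in H(j)$ has exactly $k-1$ vertices and every factor in $y^e$ is non-negative, we get $y^e \leq t^{k-1} v^e$ for each such $e$. Combined with the eigenvalue equation (\ref{eq:linkform}) for the positive eigenpair $(\lambda,\bv)$, which reads $\sum_{e \in H(j)} v^e = \lambda v_j^{k-1}$, this yields
\[
\mu (t v_j)^{k-1} \;=\; \mu y_j^{k-1} \;\leq\; \sum_{e \in H(j)} y^e \;\leq\; t^{k-1} \sum_{e \in H(j)} v^e \;=\; t^{k-1} \lambda v_j^{k-1}.
\]

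Finally, since $t > 0$ and $v_j > 0$, we may cancel the common positive factor $t^{k-1} v_j^{k-1}$ from both sides to conclude $\mu \leq \lambda$. There is no real obstacle here beyond identifying the correct scalar $t$; the positivity of $\bv$ is exactly what makes the ratios $y_i/v_i$ defined and finite, and the non-negativity of $\by$ is exactly what allows the monomial inequality $y^e \leq t^{k-1} v^e$ to be applied term by term without sign complications. The one subtlety to keep in mind is that we only used the inequality at the single index $j$ realizing the maximum, and we used the eigenvalue \emph{equality} for $\bv$ at that same index — this is the step where the assumed eigenpair structure of $(\lambda,\bv)$ is genuinely stronger than the one-sided inequality satisfied by $(\mu,\by)$.
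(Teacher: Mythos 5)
Your proof is correct and is essentially the paper's own Collatz--Wielandt-style comparison: the paper picks the largest $t_0$ with $\bv \geq t_0\by$ coordinatewise (your $t = \max_i y_i/v_i$ is just $1/t_0$) and runs the same chain of inequalities through the link equation for $\bv$. The only difference is the finish, and it favors you: by working solely at the extremal coordinate $j$ and cancelling the positive factor $t^{k-1}v_j^{k-1}$, you avoid the paper's step of solving for $v_i$ via a $(k-1)$-st root of $\mu/\lambda$, which tacitly assumes $\mu \geq 0$ (harmless, since $\mu<0\leq\lambda$ is trivial, but your version needs no such aside).
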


\begin{proof} Since $\bv$ is strictly positive, we can find a $t_0$ so that (coordinatewise), $\bv\geq t\by$ for all $0\leq t \leq t_0$, but at least one of the inequalities fails for any $t>t_0$. From the inequality $\bv\geq t_0\by,$ it is clear that for each $i$, we have  $\sum_{e\in H(i)}v^e \geq \sum_{e\in H(i)}(t_0y)^{e}.$
Thus,
\begin{align*}
\lambda v_i^{k-1} & = \sum_{e\in H(i)}v^e \\
& \geq \sum_{e\in H(i)}(t_0y)^{e} \\
& = t_0^{k-1} \sum_{e\in H(i)}y^{e}  \\
& \geq t_0^{k-1} \mu y_i^{k-1}.
\end{align*}
Solving for $v_i$, we find
$$
v_i \geq \left(\frac{\mu}{\lambda}\right)^{1/(k-1)} t_0 y_i .
$$
As this holds for all $i$, we see that $\bv \geq \left(\frac{\mu}{\lambda}\right)^{1/(k-1)} t_0 \by.$  Since $t_0$ was chosen to be the largest value that makes this inequality hold, we may conclude that $\left(\frac{\mu}{\lambda}\right)^{1/(k-1)} \leq 1$, which gives $\mu \leq \lambda$.
\end{proof}

This simple lemma allows us to deduce a few important properties.

\begin{corollary} \label{reallargest} If $H$ is a connected $k$-graph, then the real eigenvalue $\lambda$ given by Corollary \ref{realpair} is the only eigenvalue with a strictly positive eigenvector.  If $\nu$ is any other eigenvalue of $H$, then $|\nu|\leq \lambda$.
\end{corollary}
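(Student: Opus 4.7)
My plan is to deduce both assertions directly from Lemma \ref{largepos}, which is designed exactly for this purpose.

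For the uniqueness statement, suppose $(\mu, \mathbf{w})$ is another eigenpair of $H$ with $\mathbf{w}$ strictly positive. Since $(\lambda, \bv)$ is a strictly positive eigenpair and $(\mu, \mathbf{w})$ satisfies the eigenvalue equation (hence in particular the weak inequality $\sum_{e \in H(i)} w^e \geq \mu w_i^{k-1}$) with $\mathbf{w}$ non-negative and non-zero, Lemma \ref{largepos} gives $\mu \leq \lambda$. Now I would swap the roles: $(\mu, \mathbf{w})$ is itself strictly positive, and $(\lambda, \bv)$ is a non-negative eigenpair satisfying the analogous inequality. A second application of the lemma yields $\lambda \leq \mu$. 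Hence $\mu = \lambda$.

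For the modulus bound, let $(\nu, \by)$ be any eigenpair of $H$, and let $|\by|$ denote the vector with entries $|y_i|$. The eigenvalue equations read
\[
\sum_{e \in H(i)} y^e = \nu y_i^{k-1}
\]
for all $i \in V(H)$. Since every coefficient of $\cA_H$ is non-negative, the triangle inequality gives
\[
|\nu| \, |y_i|^{k-1} = \left| \sum_{e \in H(i)} y^e \right| \leq \sum_{e \in H(i)} |y|^e
\]
for every vertex $i$. Because $\by \neq 0$, the vector $|\by|$ is non-negative and non-zero, so the hypothesis of Lemma \ref{largepos} is satisfied by $(|\nu|, |\by|)$ together with the strictly positive eigenpair $(\lambda, \bv)$. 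The lemma then yields $|\nu| \leq \lambda$, as required.

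The whole argument is short and poses no real obstacle; the one point that needs care is noting that Lemma \ref{largepos} is stated for an inequality rather than an equality, which is precisely what allows the absolute-value trick in the second part to go through. Both applications depend crucially on the non-negativity of the entries of $\cA_H$, without which the triangle inequality step would fail.
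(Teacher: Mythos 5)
Your proposal is correct and follows essentially the same route as the paper: both parts are deduced from Lemma \ref{largepos}, the uniqueness by applying it twice with the roles of the two strictly positive eigenpairs swapped, and the modulus bound by applying the triangle inequality to the eigenvalue equations and feeding the pair $(|\nu|, |\by|)$ into the lemma. No gaps.
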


\begin{proof} For the first statement, suppose that  $(\lambda, \bv)$ and $(\mu, \by)$ are both strictly positive eigenpairs. Applying Lemma \ref{largepos} gives that $\mu\leq \lambda$.  Switching the roles of $\mu$ and $\lambda$ and applying the same lemma gives that $\lambda \leq \mu$.  Thus, the first statement holds.

For the second, suppose that $(\nu, \bz)$ is any eigenpair. If we set $\mu = |\nu|$ and define $\by = (|z_1|, |z_2|, \ldots, |z_n|)$, we see that for each $i,$
\begin{align*}
\mu y_i^{k-1} & =  |\nu||z_i|^{k-1} \\
& = \left|\nu z_i^{k-1}\right| \\
& = \left|\sum_{e\in H(i)} z^e\right| \\
& \leq \sum_{e\in H(i)} |z|^e \\
& = \sum_{e\in H(i)} y^e
\end{align*}
 Lemma \ref{largepos} then applies to the pair $(\mu, \by)$ to show that $|\nu|\leq \lambda$.
\end{proof}

The next theorem summarizes the results on $\lambda_{\text{max}}.$
\begin{theorem} \label{maxeigenreal} For any non-empty $k$-graph $H$, $\lambda_\text{max}$ can be chosen to be a positive real number. If $H$ is connected, then a corresponding eigenvector $\bx$ can be chosen to be strictly positive.
\end{theorem}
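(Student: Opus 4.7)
The plan is to treat the two claims of the theorem as two cases that are essentially already done, and then stitch them together using Theorem \ref{disjoint}. The first step is the connected case, which follows immediately from what has been established: Corollary \ref{realpair} hands us a strictly positive eigenpair $(\lambda,\bv)$ with $\lambda=\max_{\bx\in S_{\geq 0}} F_H(\bx)>0$, and Corollary \ref{reallargest} guarantees that $|\nu|\leq\lambda$ for every other eigenvalue $\nu$ of $H$. So in the connected case both conclusions of the theorem — that $\lambda_\text{max}$ is positive real and that the associated eigenvector can be chosen strictly positive — are just a repackaging of these two corollaries.

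Next I would reduce the general case to the connected case by decomposing $H$ into its connected components $H_1,\ldots,H_m$. By Theorem \ref{disjoint}, $\spec(H)=\bigcup_{i=1}^m \spec(H_i)$ as sets, so every eigenvalue of $H$ is an eigenvalue of some component. For each $H_i$ that contains at least one edge, the connected case produces a positive real eigenvalue $\lambda_i$ that dominates all eigenvalues of $H_i$ in modulus; for each $H_i$ that is edgeless, $\spec(H_i)=\{0\}$, which contributes nothing competitive. Since $H$ is non-empty (has at least one edge), at least one $\lambda_i$ exists, and I would set $\lambda^*=\max_i \lambda_i$. Then $\lambda^*$ is a positive real eigenvalue of $H$, and since every $\nu\in\spec(H)$ lies in some $\spec(H_j)$ and hence satisfies $|\nu|\leq\lambda_j\leq\lambda^*$, we can take $\lambda_\text{max}=\lambda^*$.

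The observation that the second conclusion is stated only in the connected case is important: if $H$ is disconnected, the eigenvector producing $\lambda^*$ lives (after padding with zeros on the other components' coordinates) in only one component, so it cannot be strictly positive on $V(H)$ — and this is precisely why the theorem conditions its strict positivity claim on connectedness. There is really no main obstacle here; the substantive analytic work was done in the lemma producing Corollary \ref{realpair} and in Lemma \ref{largepos}, so this theorem amounts to organizing those results and invoking the disjoint union spectrum formula. The only thing to be careful about is the edgeless components, which must be acknowledged but simply contribute zero eigenvalues that cannot compete with $\lambda^*>0$.
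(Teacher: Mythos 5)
Your proposal is correct and follows essentially the same route as the paper: the connected case is exactly Corollaries \ref{realpair} and \ref{reallargest}, and the general case uses Theorem \ref{disjoint} to locate $\lambda_\text{max}$ on a connected component. Your version is slightly more explicit about edgeless components and about taking the maximum over components, but the argument is the same.
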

\begin{proof} By Theorem \ref{disjoint}, we see that $\lambda_{\text{max}}$ is obtained as an eigenvalue of some connected component of $H$. For each connected component of $H$, Corollary \ref{reallargest} gives that the largest eigenvalue is real. The second statement is the result of Corollaries \ref{realpair} and \ref{reallargest}. \end{proof}

The following theorem is an analogue of a classical theorem in spectral graph theory, relating $\lambda_{\text{max}}$ to the average degree and maximum degree of the hypergraph. It also follows quickly from the above results.

\begin{theorem}  \label{degreebound} Let $H$ be a $k$-graph. Let $d$ be the average degree
of $H$, and $\Delta$ be the maximum degree. Then
$$
d \leq \lambda_{\text{max}} \leq \Delta.
$$
\end{theorem}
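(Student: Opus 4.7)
The plan is to handle the two inequalities separately, using the characterization of $\lambda_{\max}$ from Theorem \ref{maxeigenreal} for the upper bound, and the variational characterization from Corollary \ref{realpair} for the lower bound. Both arguments are short once the right test vector is chosen.

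For the upper bound $\lambda_{\max} \leq \Delta$, I would apply Theorem \ref{maxeigenreal} to obtain a real, positive eigenvalue realizing $\lambda_{\max}$ on some connected component $C$ of $H$, together with a strictly positive eigenvector $\bv$ of $C$. Extending $\bv$ by zero on $V(H) \setminus V(C)$ yields a non-negative eigenvector of $H$ for $\lambda_{\max}$ (the eigenvalue equations on vertices outside $C$ are trivially satisfied since all relevant coordinates vanish). Pick an index $i$ maximizing $v_i$; this entry is strictly positive because $\bv$ is nonzero. Applying (\ref{eq:linkform}) and bounding each term $v^e \leq v_i^{k-1}$ for $e \in H(i)$ gives
\[
\lambda_{\max} v_i^{k-1} \;=\; \sum_{e \in H(i)} v^e \;\leq\; \deg(i) \, v_i^{k-1} \;\leq\; \Delta \, v_i^{k-1},
\]
and dividing through by $v_i^{k-1} > 0$ yields the desired bound.

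For the lower bound $d \leq \lambda_{\max}$, I would test the variational characterization of $\lambda_{\max}$ against the normalized all-ones vector. Specifically, set $\bx = n^{-1/k}\1 \in S_{\geq 0}$. Then
\[
F_H(\bx) \;=\; \sum_{e \in H} k\, x^e \;=\; \frac{k\,|E(H)|}{n} \;=\; d,
\]
using the hypergraph handshake identity $\sum_i \deg(i) = k|E(H)|$. It remains to show that $F_H(\bx) \leq \lambda_{\max}$ for every $\bx \in S_{\geq 0}$. Decomposing $H$ into connected components $C_1, \ldots, C_r$, writing $\alpha_j = \sum_{i \in V(C_j)} x_i^k$ so that $\sum_j \alpha_j = 1$, and using homogeneity of each $F_{C_j}$ together with Corollary \ref{realpair} applied on each component, one obtains
\[
F_H(\bx) \;=\; \sum_{j=1}^r F_{C_j}(\bx|_{C_j}) \;\leq\; \sum_{j=1}^r \alpha_j \,\lambda_{\max}(C_j) \;\leq\; \max_j \lambda_{\max}(C_j) \;=\; \lambda_{\max}(H),
\]
where the final equality uses Theorem \ref{disjoint}. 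Combining these estimates gives $d = F_H(n^{-1/k}\1) \leq \lambda_{\max}$.

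Neither step presents a serious obstacle; the only subtlety is that Corollary \ref{realpair} is stated for connected hypergraphs, so a small amount of bookkeeping (rescaling by the $\alpha_j$ and invoking Theorem \ref{disjoint}) is required to transfer the variational description of $\lambda_{\max}$ to the disconnected setting. Everything else reduces to the standard linear-algebraic trick of evaluating the form at a maximum-coordinate eigenvector and at the uniform test vector.
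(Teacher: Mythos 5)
Your proof is correct and follows essentially the same route as the paper's: the lower bound by evaluating $F_H$ at the uniform vector $n^{-1/k}\1$ and invoking the variational characterization of $\lambda_{\text{max}}$, and the upper bound by examining the eigenvalue equation (\ref{eq:linkform}) at a maximum-modulus coordinate of an eigenvector for $\lambda_{\text{max}}$. The only difference is bookkeeping: the paper simply reduces to the connected case via Theorem \ref{disjoint} at the outset (and uses the triangle inequality on a possibly complex eigenvector), whereas you carry out the component decomposition with weights $\alpha_j$ explicitly and work with the positive eigenvector; both are fine.
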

In particular, if we have a regular $k$-graph, $d=\lambda_\text{max} = \Delta.$

\begin{proof}
We note that Theorem \ref{disjoint} allows us to assume that $H$ is connected. By Corollaries \ref{realpair} and \ref{reallargest}, and Theorem \ref{maxeigenreal}, 
 we have that $\lambda_\text{max} \geq F_H(\bx)$ for any vector $\bx \in S_{\geq 0}$. If we let $\1$ be the vector with all entries equal to $1/ \sqrt[k]{n},$ we see that $\lambda_{\text{max}}\geq F_H(\1) = d.$

For the upper bound, let $\hat{\bv}$ be a vector achieving $\lambda_{\text{max}}$. Let $\hat{v}_i$ be the entry of $\hat{\bv}$ with largest modulus.
Rescale $\hat{\bv}$ to a vector $\bv$ where $v_i = 1.$

Then the $i^{\text{th}}$ eigenvalue equation gives that
$$ \lambda_{\text{max}}v_i^{k-1} = \sum_{e\in H(i)} v^e.$$

Hence \begin{align*} \lambda_{\text{max}} & = |\lambda_{\text{max}}v_i^{k-1}| \\
& = \left| \sum_{e\in H(i)} v^e \right| \\
& \leq \sum_{e\in H(i)}  |v^e| \\
& \leq \sum_{e\in H(i)} |v_i^{k-1}| \\
& = \sum_{e\in H(i)} 1  \\
&  = \deg(i) \leq \Delta .
\end{align*}
\end{proof}

\begin{theorem} \label{subgraph} If $G$ is a subgraph of $H$, then
$$
\lambda_\text{max}(G)\leq \lambda_\text{max}(H).
$$
\end{theorem}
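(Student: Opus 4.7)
The plan is to combine the variational characterization of $\lambda_{\text{max}}$ (implicit in Corollary \ref{realpair} and the proof of Theorem \ref{degreebound}) with the observation that $F_H$ term-wise dominates $F_G$ when we evaluate it on a non-negatively extended vector. More precisely, I would first establish the fact that for any $k$-graph $K$,
\[
\lambda_\text{max}(K) \;=\; \max_{\bx \in S_{\geq 0}(K)} F_K(\bx).
\]
For connected $K$ this is exactly Corollary \ref{realpair} together with Corollary \ref{reallargest}; for general $K$, Theorem \ref{disjoint} lets us reduce to a connected component achieving the largest eigenvalue, and extending the maximizing vector on that component by zeros on the remaining vertices produces a vector in $S_{\geq 0}(K)$ with the same value of $F_K$.

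Given this, I would proceed as follows. Assume without loss of generality that $V(G) \subseteq V(H)$, and let $\bv \in S_{\geq 0}(G)$ be a non-negative vector achieving $\lambda_\text{max}(G) = F_G(\bv)$. Define $\bv' \in \mathbb{R}^{V(H)}$ by setting $v'_i = v_i$ for $i \in V(G)$ and $v'_i = 0$ for $i \in V(H) \setminus V(G)$. Then $\sum_{i \in V(H)} (v'_i)^k = \sum_{i \in V(G)} v_i^k = 1$ and all entries are non-negative, so $\bv' \in S_{\geq 0}(H)$. Moreover, since $E(G) \subseteq E(H)$ and every term $(\bv')^e$ is non-negative,
\[
F_H(\bv') \;=\; k\!\sum_{e \in E(H)} (\bv')^e \;\geq\; k\!\sum_{e \in E(G)} (\bv')^e \;=\; k\!\sum_{e \in E(G)} \bv^e \;=\; F_G(\bv).
\]
Applying the variational characterization to $H$ then yields
\[
\lambda_\text{max}(H) \;\geq\; F_H(\bv') \;\geq\; F_G(\bv) \;=\; \lambda_\text{max}(G),
\]
completing the argument.

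The only real subtlety, and the step I expect to require the most care, is justifying the variational formula $\lambda_\text{max}(K) = \max_{S_{\geq 0}} F_K$ in full generality (not just for connected $K$), since the lemma preceding Corollary \ref{realpair} is phrased about maximizers and the paper's Corollary \ref{realpair} is stated for connected hypergraphs. Once that is in hand, the remainder is a clean monotonicity observation: restricting to a subset of edges and padding with zero entries can only decrease the number of non-negative monomials contributing to the form, never increase the value.
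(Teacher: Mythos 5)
Your overall strategy is the same as the paper's: take a nonnegative maximizer of $F_G$ on $S_{\geq 0}$, pad it with zeros on $V(H)\setminus V(G)$, use that every monomial of $F_G$ appears in $F_H$ and all contributions are nonnegative, and finish with the variational upper bound for $H$. The paper organizes this by first invoking Theorem \ref{disjoint} to assume $G$ and $H$ are both connected (a component of $G$ attaining $\lambda_\text{max}(G)$ is connected and lies inside a single component of $H$), so that Corollary \ref{realpair} together with Corollary \ref{reallargest} directly supplies both the attainment of $\lambda_\text{max}(G)$ on the $G$ side and the bound $F_H(\bu)\leq\lambda_\text{max}(H)$ on the $H$ side.

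The one genuine gap sits exactly at the point you flag but do not resolve. The inequality you actually use for $H$, namely $F_H(\bx)\leq\lambda_\text{max}(H)$ for every $\bx\in S_{\geq 0}$, is established in the paper only for connected $H$, and your proposed reduction for general $K$ (pass to the component attaining $\lambda_\text{max}$ and extend its maximizer by zeros) proves only the other direction of your variational formula, that $\lambda_\text{max}(K)$ is attained as a value of $F_K$ on $S_{\geq 0}$; it says nothing about vectors whose support meets several components of $H$. The fix is short: if $H$ has components $H_1,\ldots,H_r$ and $\bx\in S_{\geq 0}$, set $t_j=\sum_{i\in V(H_j)}x_i^k$, so $\sum_j t_j=1$; by homogeneity, whenever $t_j>0$ one has $F_{H_j}\bigl(\bx\vert_{H_j}\bigr)=t_j\,F_{H_j}\bigl(t_j^{-1/k}\,\bx\vert_{H_j}\bigr)\leq t_j\,\lambda_\text{max}(H_j)\leq t_j\,\lambda_\text{max}(H)$, and summing over $j$ gives $F_H(\bx)\leq\lambda_\text{max}(H)$. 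Alternatively you can sidestep the issue entirely as the paper does, by reducing at the outset to connected $G$ and $H$. With that piece supplied, your argument is complete and essentially coincides with the paper's.
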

\begin{proof} By Theorem \ref{disjoint}, we can assume that $G$ and $H$ are both connected. Let $F_G(\bx)$ and $F_H(\bx)$ be their associated homogeneous forms. We note that both have coefficients in $\{0, k\}$, and every term in $F_G(\bx)$ appears in $F_H(\bx)$. Let $\bv$ be the vector from the set ${S}_{\geq 0}$ that achieves $\lambda_\text{max}(G)$. Let $\bu$ be the same vector, with zero entries for any vertices that $H$ has, but $G$ lacks. Then we have
$$
\lambda_\text{max}(G)  = F_G(\bv) \leq F_H(\bu).
$$
By Theorem \ref{maxeigenreal}, we have that $F_H(\bu)\leq \lambda_\text{max}(H)$.
\end{proof}

\subsection{Chromatic Number and the Largest Eigenvalue}
For a hypergraph $H$, a function $f:V(H)\rightarrow [r]$ is a (weak) proper $r$-coloring of $H$ if for every edge $e = \{v_1, v_2, \ldots v_k\},$ there exist $i\neq j$ so that $f(v_i)\neq f(v_j).$ Informally, no edge has all of its vertices colored the same. The (weak) \emph{chromatic number} of $H,$ denoted $\chi(H)$, is the minimum $r$ such that $H$ has a proper $r$-coloring.

\begin{theorem} \label{chromatic} For any $k$-graph, $\chi(H)\leq \lambda_{\text{max}}(H)+1.$
\end{theorem}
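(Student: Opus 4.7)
The plan is to adapt the classical Wilf-style argument for graphs, which establishes $\chi(G) \le \lambda_{\max}(G) + 1$ by passing to a color-critical subgraph and bounding its minimum degree. The spectral half of the argument will follow at once from Theorems \ref{degreebound} and \ref{subgraph}; the real work is transferring the criticality step from graph proper colorings to weak hypergraph colorings.

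First I would pass to a vertex-critical sub-hypergraph. Starting from $H$, iteratively delete any vertex $v$ for which $\chi(H - v) = \chi(H)$ (here $H-v$ means delete $v$ and all incident edges, yielding an induced sub-hypergraph, and in particular a subgraph in the sense of Section 3). This terminates at some $H' \subseteq H$ with $\chi(H') = \chi(H)$ and the property that $\chi(H' - v) < \chi(H')$ for every $v \in V(H')$.

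Next I would show $\delta(H') \ge \chi(H) - 1$ via a coloring-extension argument. Set $r = \chi(H')$, fix $v \in V(H')$, and fix a proper $(r-1)$-coloring of $H' - v$ (which exists by vertex-criticality). For each edge $e \in E(H')$ with $v \in e$, call a color $c \in [r-1]$ \emph{forbidden at $v$} if every vertex of $e \setminus \{v\}$ already has color $c$; assigning $v$ this color would make $e$ monochromatic, which is exactly what the weak-coloring definition forbids. Each edge through $v$ contributes at most one forbidden color, so there are at most $\deg_{H'}(v)$ forbidden colors in total. If $\deg_{H'}(v) \le r - 2$ we could pick an unforbidden color in $[r-1]$ for $v$ and extend to a proper $(r-1)$-coloring of $H'$, contradicting $\chi(H') = r$. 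Hence $\deg_{H'}(v) \ge r - 1$, and in particular the average degree $d(H')$ is at least $r - 1 = \chi(H) - 1$.

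Finally I would assemble the inequalities: Theorem \ref{degreebound} gives $\lambda_{\max}(H') \ge d(H') \ge \chi(H) - 1$, and Theorem \ref{subgraph} gives $\lambda_{\max}(H) \ge \lambda_{\max}(H')$, so $\chi(H) \le \lambda_{\max}(H) + 1$. I do not foresee any serious obstacle; the one subtlety is to confirm that in the weak-coloring model each edge through $v$ contributes only a single forbidden color (because monochromaticity of $e$ requires \emph{all} vertices of $e$ to agree), which is the entire reason the bound reads $\deg_{H'}(v) \ge r - 1$ rather than something larger.
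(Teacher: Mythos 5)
Your proof is correct, but it takes a different combinatorial route than the paper. The paper follows the Brooks-style greedy argument: it builds a smallest-last ordering $v_1,\ldots,v_n$ by repeatedly deleting a minimum-degree vertex, colors greedily, and bounds the number of colors needed at each step by $\delta(H(m+1))+1 \le d(H(m+1))+1 \le \lambda_{\max}(H(m+1))+1 \le \lambda_{\max}(H)+1$, invoking Theorems \ref{degreebound} and \ref{subgraph} exactly as you do. You instead pass to a vertex-critical sub-hypergraph $H'$ and prove the lemma that a critical $k$-graph has $\delta(H') \ge \chi(H)-1$ in the weak-coloring model, then apply the same two spectral theorems to $H'$. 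Both arguments hinge on the identical key observation --- each edge through the vertex being colored forbids at most one color, since monochromaticity requires \emph{all} of its other vertices to agree --- which you correctly flag as the one point needing care. What the paper's ordering argument buys is the explicit greedy algorithm and the remark (made right after Corollary \ref{thm:brooks}) that $\lambda_{\max}$, and hence $\Delta$, can be replaced by the degeneracy $\max_{G\subseteq H}\delta(G)$; your version yields the same strengthening implicitly, since $\chi(H)-1 \le \delta(H') \le \max_{G\subseteq H}\delta(G)$, and in addition isolates a reusable structural fact about color-critical hypergraphs. The only loose ends in your write-up are trivial edge cases ($\chi(H)\le 1$, where the bound follows from $\lambda_{\max}\ge 0$), which do not affect correctness.
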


Our proof is a reprise of the classical proof by Wilf in \cite{wil67}, which uses the coloring method described by Brooks \cite{bro41}.

\begin{proof} Define an ordering on the vertices of $H$ as follows. Let $H(n)=H$, and let $v_n$ be a vertex of smallest degree in $H(n)$. Inductively, we let $H(m)$ be the subgraph that remains after deleting vertex $v_{m+1}$ from $H(m+1),$ and let $v_m$ be a vertex of smallest degree in $H(m)$. We use the ordering $v_1, v_2, \ldots, v_n$ as input to a greedy coloring algorithm, which assigns to vertex $v_i$ the smallest  natural number so that $H(i)$ is still properly colored.

Now, note that $\chi(H(1)) = 1 \leq \lambda_{\text{max}}(H)+1$, as we know $\lambda_{\text{max}}\geq 0$. Inductively, we assume that we have properly colored $H(m)$ with at most $\lambda_{\text{max}}(H)+1$ colors. Note that vertex $v_{m+1}$ has the smallest degree of all vertices in $H(m+1)$. In the worst case, each edge containing $v_{m+1}$ are, aside from $v_{m+1}$, monochrome, and use the colors $\{1,2,\ldots, \deg_{H(m+1)}(v_{m+1})\}.$ In this situation, we need to use color $\deg_{H(m+1)}(v_{m+1})+1$ for $v_{m+1}$. Hence,    we see that to color vertex $v_{m+1}$, we need at most
\begin{align*}
\deg_{H(m+1)}(v_{m+1}) +1 & = \delta(H(m+1))+1 \\
&  \leq d(H(m+1))+1 \\
& \leq \lambda_{\text{max}}(H(m+1))+1 \\
& \leq \lambda_{\text{max}}(H)+1.
\end{align*}
The first inequality above is trivial, as the average degree is always at least the minimum degree, and the last two inequalities follow from Theorems \ref{degreebound}  and \ref{subgraph} respectively. \end{proof}

By Theorem \ref{degreebound}, we recover the $k$-graph analogue of Brooks' bound on the chromatic number.
\begin{corollary} \label{thm:brooks} For any $k$-graph $H$,  $\chi(H)\leq \Delta(H) +1$. \end{corollary}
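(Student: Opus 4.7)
The plan is essentially to chain together the two inequalities that have already been established. From Theorem \ref{chromatic} I have $\chi(H) \leq \lambda_{\text{max}}(H) + 1$, and from Theorem \ref{degreebound} I have the upper bound $\lambda_{\text{max}}(H) \leq \Delta(H)$. Adding $1$ to the latter and composing with the former yields the desired conclusion $\chi(H) \leq \Delta(H) + 1$ in a single line.

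More concretely, I would just write: by Theorem \ref{chromatic},
\[
\chi(H) \leq \lambda_{\text{max}}(H) + 1,
\]
and by the upper bound half of Theorem \ref{degreebound},
\[
\lambda_{\text{max}}(H) + 1 \leq \Delta(H) + 1,
\]
so the result follows by transitivity.

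There is no real obstacle here; the content of the corollary has already been absorbed into the two preceding theorems, and the only thing to verify is that the hypotheses of both results are met (namely, that $H$ is a $k$-graph, which is exactly the hypothesis of the corollary). The harder work was packed into Theorem \ref{chromatic} (the greedy coloring argument following Wilf) and into Theorem \ref{degreebound} (bounding $\lambda_{\text{max}}$ by $\Delta$ via the eigenvalue equation applied at the coordinate of largest modulus). So my proposal reduces to a two-line derivation that simply invokes these results in sequence.
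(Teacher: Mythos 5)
Your proposal is correct and matches the paper's own reasoning exactly: the corollary is obtained by chaining Theorem \ref{chromatic} with the upper bound $\lambda_{\text{max}}(H)\leq\Delta(H)$ from Theorem \ref{degreebound}. Nothing further is needed.
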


Also implicit in our proof of Theorem \ref{chromatic} is that $\Delta(H)$ in Theorem \ref{thm:brooks} can also be replaced by the \emph{degeneracy} of the hypergraph, which is $\max_{G \subseteq H} \delta(G)$.

If we consider $\Delta(H)$ tending to infinity and $k$ fixed, Corollary \ref{thm:brooks} can be improved further using probabilistic methods. Indeed, a simple application of the Lov\'{a}sz Local Lemma \cite{as08} gives that
$$
\chi(H) \leq \left [ e(k \Delta(H) + 1) \right ]^{1/(k-1)} = O\left(\Delta(H)^{1/(k-1)}\right).
$$
However, if $\Delta(H) \gg d(H)$, it is possible that $\lambda_{\textrm{max}}$ is substantially smaller than $\Delta(H)$, so that Theorem \ref{chromatic} is still a better bound in some cases.

\subsection{Coefficients of the Characteristic Polynomial}

The characteristic polynomial is defined as the resultant of a certain system of equations, so calculating the characteristic polynomial requires computation of the resultant.  In \cite{russians}, Morozov and Shakirov  give a formula (using somewhat different notation) for calculating $\det(\cI-\cA)$ using ``Schur polynomials'' in the \emph{generalized traces} of the order $k$, dimension $n$ hypermatrix $\cA$.

\begin{definition} Define the $d$-th Schur polynomial $P_d \in \mathbb{Z}[t_1,\ldots,t_d]$ by $P_0 = 1$ and, for $d > 0$,
$$
P_d(t_1,\ldots,t_d) = \sum_{m=1}^d \sum_{\substack{d_1 + \cdots + d_m = d \\ \forall i (d_i > 0)}} \frac{t_{d_1} \cdots t_{d_m}}{m!}.
$$
More compactly, one may define the $P_d$ by writing
$$
\exp \left ( \sum_{d=1}^\infty t_d z^d \right ) = \sum_{d=1}^\infty P_d(t_1,\ldots,t_d) z^d.
$$
\end{definition}

Let $f_i$ denote the $i$-th coordinate of $\cA\bx^{k-1}$.  Define $A$ to be an auxiliary $n \times n$ matrix with distinct variables $A_{ij}$ as entries.  For each $i$, we define the differential operator
$$
\hat{f}_i = f_i\left(\frac{\partial}{\partial A_{i1}},\frac{\partial}{\partial A_{i2}}, \ldots, \frac{\partial}{\partial A_{in}}\right)
$$
in the natural way.  (To be precise, let $\mathcal{O}$ be the operator algebra over $\mathbb{C}$ generated by the differential operators $\{\partial/\partial A_{ij}\}_{i,j=1}^n$. Then $\hat{f}_i$ is the image of $f_i(x_1,\ldots,x_n)$ under the homomorphism from $\mathbb{C}[x_1,\ldots,x_n]$ to $\mathcal{O}$ defined by $x_j \mapsto \partial/\partial A_{ij}$ for each $j \in [n]$.)  For $d>0$, define the generalized $d$-th trace $\Tr_d(\cA)$ by
$$
\Tr_d(\cA) = (k-1)^{n-1} \sum_{d_1+d_2+\ldots+d_n=d} \left(\prod_{i=1}^n\frac{\hat{f}_i^{d_i}}{(d_i(k-1))!}\right)\tr(A^{d(k-1)}),
$$
where $\tr(\cdot)$ denotes ordinary matrix trace.  The authors prove that
$$
\det(\cI -\cA)= \exp\left(\sum_{i=1}^\infty -\frac{\Tr_i(\cA)}{i}\right).
$$
The right hand side of this equation can be expanded as a power series using the Schur polynomials, where we see
$$
\det(\cI-\cA) = \sum_{i=0}^\infty P_i\left(-\frac{\Tr_1(\cA)}{1}, \ldots, -\frac{\Tr_i(\cA)}{i}\right)
$$
Morozov and Shakirov note that since we know the degree of the resultant, and since the $d$-th Schur polynomial in the above expression is homogeneous of degree $d$ in the coefficients of $\cA$,  the resultant of $\cA$ (up to sign) is simply the Schur polynomial of the correct degree in this expression.

We note that this same reasoning tells us that the codegree $d$ coefficient of the characteristic polynomial of $\cA$ is  the degree $d$ part of the above expression, which is exactly the $d$-th Schur polynomial in the expression above.

This gives us a concrete way of finding the coefficients of the characteristic polynomial for any particular hypermatrix. It also gives us a tool to analyze the symmetric hypermatrix with variable entries $a_{i_1 i_2 \ldots i_k}$. Our first application of the technique is to prove that monomials in the symmetric hyperdeterminant have a very particular form.

 \begin{definition}\label{kvalentdef} Let $R$ be any ring, and let
$$
V = \{a_{i_1 i_2 \ldots i_k} \, | \, i_j\in [n],\,  j \in [k], a_{i_1 i_2\ldots i_k} = a_{i_{\sigma(1)} i_{\sigma(2)}\ldots i_{\sigma(k)}} \forall \sigma \in \mathfrak{S}_k\}
$$
be a set of variables.  A monomial $M$ in $R[V]$  is called \emph{$t$-valent} ($t\geq 2$) if every index $i$ appearing in a subscript of some variable occurring in $M$ appears $ 0 \pmod t$ times in $M$.
\end{definition}
For example, $a_{111}a_{001}$ is 2-valent (or ``bivalent''), $a_{011}a_{100}$ is 3-valent (``trivalent''), while $a_{011}a_{222}$ has no valency.

\begin{theorem} \label{kvalent} If $A$ is the order $k$ dimension $n$ symmetric hypermatrix with variable entries, every term of every coefficient of $\phi_A(\lambda)$ is $k$-valent.\end{theorem}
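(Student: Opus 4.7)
The plan is to invoke the Morozov--Shakirov expansion recalled above: the coefficient of $\lambda^{D-d}$ in $\phi_A(\lambda)$ (where $D = n(k-1)^{n-1}$ is the degree in $\lambda$) is, up to sign, the Schur polynomial $P_d(-\Tr_1(A)/1, \ldots, -\Tr_d(A)/d)$. Since $P_d$ is a $\mathbb{Z}$-polynomial in its arguments and $k$-valency is preserved under multiplication of monomials (the number of occurrences of any fixed index is additive across factors), it suffices to prove that every monomial of each $\Tr_d(A)$ is $k$-valent.

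Next, I would fix a composition $d_1 + \cdots + d_n = d$ and unpack a typical term in the corresponding summand of $\Tr_d(A)$,
\[
\left(\prod_{i=1}^n \frac{\hat{f}_i^{d_i}}{(d_i(k-1))!}\right)\tr(A^{d(k-1)}),
\]
with the $A$ inside $\tr$ understood as the auxiliary matrix from the Morozov--Shakirov setup. Setting $M = d(k-1)$ and expanding each factor via $\hat{f}_i = \sum_{\ell_2,\ldots,\ell_k} a_{i\ell_2\ldots\ell_k}\,\partial^{k-1}/(\partial A_{i\ell_2}\cdots\partial A_{i\ell_k})$, the expression becomes a sum over (i) a choice of non-first subscripts for each of the $d_i$ copies of $\hat{f}_i$, and (ii) a tuple $(j_1,\ldots,j_M)$ indexing a term $A_{j_1 j_2} A_{j_2 j_3} \cdots A_{j_M j_1}$ of $\tr(A^M)$. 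Such a term contributes nontrivially precisely when the $M$ differentiation factors $\partial/\partial A_{ab}$ can be paired bijectively with the $M$ cycle positions so that $(a,b) = (j_p, j_{p+1})$ at every matched position $p$.

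Finally, I fix an index $c \in [n]$ and count how often $c$ appears among the subscripts of the $a$-variables in any one such term. Each of the $d_c$ applications of $\hat{f}_c$ produces exactly one $a$-variable whose first subscript equals $c$, contributing $d_c$ occurrences. The remaining occurrences are non-first subscripts, and these are in bijection with derivative factors whose column-index is $c$; by the pairing, their total equals $m_c$, the multiplicity of $c$ in the cyclic tuple $(j_1,\ldots,j_M)$. Reading the same pairing through row indices, $m_c$ must also equal the number of derivative factors whose row-index is $c$, which is $d_c(k-1)$ since only the $\hat{f}_c$ operators contribute row-$c$ derivatives (exactly $k-1$ per application). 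Consequently the total multiplicity of $c$ in the monomial is
\[
d_c + m_c \;=\; d_c + d_c(k-1) \;=\; d_c k \;\equiv\; 0 \pmod{k},
\]
which is the desired $k$-valency. The main obstacle I anticipate is purely organizational: writing down the bijection between differential factors and cycle positions carefully enough that the parallel row/column counts of $m_c$ are both visibly correct, and confirming that ``rows come from the $\hat{f}_i$-labels while columns come from the free non-first $a$-subscripts'' is consistent with the matching to trace positions. Once this bookkeeping is in place, the identity $m_c = d_c(k-1)$ is forced and the result is immediate.
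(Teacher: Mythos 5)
Your proposal is correct and follows essentially the same route as the paper's proof: reduce to showing each generalized trace yields only $k$-valent monomials, then for a surviving term count the occurrences of a fixed index $c$ as $d_c$ primary (first-subscript) appearances plus secondary appearances, which the cyclic structure of $\tr(A^{d(k-1)})$ forces to equal the $d_c(k-1)$ row-$c$ derivatives, giving $kd_c \equiv 0 \pmod{k}$ in total. Your row/column bookkeeping via the pairing with trace positions is just a more explicit rendering of the paper's primary/secondary index argument.
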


\begin{proof}Note that since terms arise from multiplying generalized traces together, it suffices to show that each generalized trace produces only $k$-valent terms. Since the traces are sums of terms of the form
$$
\left(\prod_{i=1}^n\frac{\hat{f}_i^{d_i}}{(d_i(k-1))!}\right)\tr(A^{d(k-1)}),
$$
for some $d_1+\ldots+d_n = d$, it suffices to show that each of these terms produces only $k$-valent terms.

We say that a term of $\tr(A^{d(k-1)})$ \emph{survives} an operator if it is nonvanishing under the action of the operator. For a single term of an operator involving $r$ differentiations and a single term of $\tr(A^r)$, this amounts to the variables of the trace term being in one-to-one correspondence with the differentiation variables.

If we ignore scalar factors, any single term of $\cD = \prod_{i=1}^n \hat{f}_i^{d_i}$ consists of the product of $d$ differential operators, $d_i$ of them of the form
$$
a_{i j_2 \ldots j_k}\frac{\partial}{\partial A_{i,j_2}}\frac{\partial}{\partial A_{i,j_3}} \ldots \frac{\partial}{\partial A_{i,j_k}}
$$
for each $i$.  When a variable $a_{i j_2 \ldots j_k}$ is included in such a product arising from the operator $\cD_i = \hat{f}_i^{d_i}$, call $i$ the variable's \emph{primary} index, and call the other indices \emph{secondary}.  Clearly, $i$ appears $d_i$ times as a primary index in any term of $\cD_i$.  To show that the only monomials that survive $\cD$ are $k$-valent, we show that a term of $\tr(A^{d(k-1)})$ that survives must use $i$ exactly $d_i(k-1)$ times among the secondary indices.

Note that if $i$ is a primary index for a variable, then each of the $k-1$ partial derivatives that accompany it have $i$ as the first index of the differentiation variable.  Recall that
$$
\tr(A^r) = \sum_{i_1, i_2, \ldots i_r} A_{i_1,i_2}A_{i_2,i_3}\ldots A_{i_{r-1},i_r}A_{i_r,i_1}.
$$
For such a monomial appearing in $\tr(A^{d(k-1)})$ to survive, for each $i \in [n]$, $i$ must occur as a first index of some $A_{st}$ variable $d_i(k-1)$ times. Hence, by the form of the trace monomials, it also occurs as a second index the same number of times.  However, the second indices of the differential operator variables correspond exactly to the secondary indices of the variables $a_{j j_2 \ldots j_k}$ from the terms of $\cD$. Therefore $i$ appears exactly $d_i(k-1)$ times as a secondary index as well, completing the proof.
\end{proof}

Using the methods above, we can fully describe the first few coefficients of the characteristic polynomial for a general $k$-graph.

\begin{theorem} \label{zerocoeff} For a $k$-graph $H$, the codegree $1,2,\ldots, k-1$ coefficients of $\phi_H(\lambda)$ are zero.\end{theorem}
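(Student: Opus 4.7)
The plan is to combine Theorem~\ref{kvalent} with a simple counting argument about how indices from edges can combine to form a $k$-valent monomial. By the Morozov--Shakirov expansion discussed above, the codegree $d$ coefficient of $\phi_A(\lambda)$ (where $A$ is the symmetric hypermatrix with generic variable entries) is a polynomial in the $a_{i_1 \ldots i_k}$, and Theorem~\ref{kvalent} tells us that every monomial appearing in it is $k$-valent. Specialising the generic hypermatrix to $\cA_H$, only those monomials survive in which every variable $a_{i_1 \ldots i_k}$ has its index multiset equal to some edge of $H$; in particular, each such variable contributes exactly $k$ \emph{distinct} indices.

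With this in hand I would argue as follows. Fix $d$ with $1 \leq d \leq k-1$, and consider any monomial
$$
M = a_{e_1} a_{e_2} \cdots a_{e_d}
$$
that contributes to the codegree $d$ coefficient of $\phi_H(\lambda)$. The total number of index appearances across $M$ is $dk$. Since $M$ is $k$-valent, every index that appears in $M$ appears a positive multiple of $k$ times, hence at least $k$ times. Consequently the number of distinct indices occurring in $M$ is at most $dk/k = d$.

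On the other hand, for $M$ to contribute a nonzero term after substituting the entries of $\cA_H$, each factor $a_{e_j}$ must correspond to an actual edge of $H$, and so its subscript consists of $k$ pairwise distinct indices. Looking at just the first factor $a_{e_1}$, this forces the set of indices appearing in $M$ to have cardinality at least $k$. Combined with the upper bound of $d$ from the previous paragraph, we obtain $k \leq d$, contradicting $d \leq k-1$. Hence no monomial survives, and the codegree $d$ coefficient is zero for all $1 \leq d \leq k-1$.

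The whole argument is really a pigeonhole observation once Theorem~\ref{kvalent} is in place, so there is no substantial obstacle; the only thing to be careful about is the bookkeeping that ensures ``$k$-valency plus nonzero on $\cA_H$'' forces both ``at most $d$ distinct indices'' and ``at least $k$ distinct indices.'' Once that dichotomy is set up cleanly, the contradiction for $d < k$ is immediate.
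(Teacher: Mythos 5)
Your proposal is correct and follows essentially the same route as the paper: both invoke Theorem~\ref{kvalent} and then observe that, since each surviving variable corresponds to an edge with $k$ distinct indices, a $k$-valent monomial must have at least $k$ factors, so no such monomial of degree $d < k$ exists. Your version just makes the counting (at most $d$ distinct indices versus at least $k$) explicit where the paper leaves it as a brief remark.
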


\begin{proof} We consider an adjacency hypermatrix filled with variables whose indices label the possible edges of a $k$-graph. Then any monomial in these variables can be thought of as a multi-subgraph of $H$. By the previous theorem, we see that the codegree $d$ coefficient is made up of constant multiples of $k$-valent monomials of degree $d$, which correspond to multi-subgraphs; we extend the definition of $k$-valency to multihypergraphs in this way.  Noting that each variable in a hypergraph monomial uses $k$ distinct indices, we see that there are no $k$-valent subgraphs with fewer than $k$ edges, proving our claim.
\end{proof}

\begin{corollary} \label{zerotraces} For a $k$-graph,   $\Tr_i(H) = 0$ for $1\leq i < k$.
\end{corollary}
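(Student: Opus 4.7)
The plan is to combine Theorem \ref{zerocoeff} with the Morozov--Shakirov expansion recalled immediately before Definition \ref{kvalentdef}, and then to peel off the traces one at a time by induction on $d$.

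I would first recall that, by the discussion following the definition of the Schur polynomials, the codegree $d$ coefficient of $\phi_H(\lambda)$ is the evaluation
$$
P_d\!\left(-\frac{\Tr_1(H)}{1},\, -\frac{\Tr_2(H)}{2},\, \ldots,\, -\frac{\Tr_d(H)}{d}\right),
$$
and by Theorem \ref{zerocoeff} this quantity is zero whenever $1 \leq d \leq k-1$. The base case $d=1$ of the induction is then immediate from $P_1(t_1) = t_1$, giving $\Tr_1(H) = 0$.

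For the inductive step with $1 < d \leq k-1$, I would assume $\Tr_i(H) = 0$ for every $i < d$ and inspect
$$
P_d(t_1,\ldots,t_d) = \sum_{m=1}^d \sum_{\substack{d_1 + \cdots + d_m = d \\ d_i > 0}} \frac{t_{d_1}\cdots t_{d_m}}{m!}.
$$
Every term with $m \geq 2$ has all parts $d_j$ strictly less than $d$ (since they are positive and sum to $d$), so it contains at least one factor $t_{d_j}$ with $d_j < d$, which vanishes by the inductive hypothesis. The only surviving contribution is the $m=1$ term $t_d$, so the codegree $d$ coefficient collapses to $-\Tr_d(H)/d$, and setting this equal to zero yields $\Tr_d(H) = 0$.

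I do not anticipate any real obstacle here: all of the hypergraph-theoretic content has already been packaged into Theorem \ref{zerocoeff}, and what remains is the purely combinatorial observation that in $P_d$ the variable $t_d$ is the unique monomial with no factor of strictly smaller index. The only mild care needed is to match the Morozov--Shakirov formula for $\det(\cI-\cA)$ with the characteristic polynomial $\det(\lambda\cI - \cA)$ via the homogeneity of $\Tr_i$ in the entries of $\cA$, but this identification has already been made in the preceding text.
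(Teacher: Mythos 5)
Your argument is correct, but it runs in a different direction from the paper's. The paper proves the corollary directly from the \emph{insides} of the earlier proofs: the proof of Theorem \ref{kvalent} shows that each generalized trace $\Tr_i(H)$ is a sum of constant multiples of $k$-valent monomials, and the proof of Theorem \ref{zerocoeff} observes that no $k$-valent multi-subgraph has fewer than $k$ edges, so each $\Tr_i(H)$ with $i<k$ is an empty sum --- a two-line argument that never touches the Schur-polynomial expansion. You instead take the \emph{statement} of Theorem \ref{zerocoeff} as a black box and invert the relation ``codegree $d$ coefficient $= P_d(-\Tr_1/1,\ldots,-\Tr_d/d)$'' by induction on $d$, using the fact that $t_d$ is the unique monomial of $P_d$ with no factor of smaller index. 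Both are sound. The paper's route is shorter and shows the traces vanish for intrinsic combinatorial reasons, independent of the characteristic polynomial; your route needs only the black-box coefficient statement, and as a bonus it spells out the collapse $P_d(\ldots) = -\Tr_d(H)/d$ under vanishing lower traces, which is exactly the ``decomposition of $P_k$'' that the paper's proof of Theorem \ref{codegreek} later attributes to this corollary's proof but never actually writes down.
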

\begin{proof}
The proof of Theorem \ref{kvalent} tells us that generalized traces only produce $k$-valent terms, and the proof of Theorem \ref{zerocoeff} gives that there are no $k$-valent subgraphs on fewer than $k$ edges. \end{proof}  

With a bit more work, we can also characterize the codegree $k$ coefficient in terms of the number of edges of the $k$-graph.

\begin{theorem} \label{codegreek} For a $k$-graph $H$, the codegree $k$ coefficient of $\phi_H(\lambda)$ is $-k^{k-2}(k-1)^{n-k} |E(H)|$.  \end{theorem}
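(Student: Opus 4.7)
The plan is to use the Morozov--Shakirov formula to reduce the codegree-$k$ coefficient of $\phi_H(\lambda)$ to a single generalized trace, then to compute that trace by the explicit differential-operator formula. The computation will ultimately evaluate to an Eulerian-circuit count on the complete directed graph $K_k^\ast$ on $k$ vertices, which can be handled cleanly by the BEST theorem combined with the matrix-tree theorem.

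First, by the discussion surrounding Morozov--Shakirov, the codegree-$k$ coefficient of $\phi_H(\lambda)$ equals $P_k(-\Tr_1(H)/1,\ldots,-\Tr_k(H)/k)$. Corollary \ref{zerotraces} kills $\Tr_i(H)$ for every $i<k$, so the only summand of the Schur polynomial $P_k$ that survives comes from the partition $(k)$ of $k$, contributing exactly $-\Tr_k(H)/k$. Thus the theorem is equivalent to showing $\Tr_k(H) = k^{k-1}(k-1)^{n-k}|E(H)|$. To compute $\Tr_k(H)$, I would classify the contributing monomials of degree $k$ in the $a$-variables. By Theorem \ref{kvalent} they are $k$-valent; for $k$ edges of a $k$-uniform hypergraph, each vertex lies in at most $k$ edges, so $k$-valency forces every vertex degree to be $0$ or $k$, and the total of $k^2$ incidences pins the multi-subgraph down to $k$ copies of a single edge $e\in E(H)$. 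Hence $\Tr_k(H) = |E(H)|\cdot T_e$ for any fixed edge $e$.

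For a fixed $e = \{j_1,\ldots,j_k\}$, I would next use the proof of Theorem \ref{kvalent} to pin down the unique index assignment $(d_1,\ldots,d_n)$ producing $a_e^k$: since each $j_l$ appears $k$ times in $k\cdot e$, with $d_{j_l}$ primary and $d_{j_l}(k-1)$ secondary occurrences, one gets $d_{j_l}=1$ for all $l$ and $d_i=0$ otherwise. For this assignment the $(k-1)!$ ordered tuples $(i_2,\ldots,i_k)$ inside each $\hat{f}_{j_l}$ that correspond to edge $e$ collapse by commutativity of partials into a single operator; after cancelling the $((k-1)!)^k$ from $\prod_l (d_{j_l}(k-1))!$, the contribution reduces to $a_e^k$ times the $k(k-1)$-fold product $\cD = \prod_{s\neq t,\, s,t\in e} \partial/\partial A_{st}$ applied to $\tr(A^{k(k-1)})$. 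A term of that trace survives $\cD$ precisely when its index sequence traces an Eulerian circuit of the complete directed graph $K_k^\ast$ on $e$, counted as an ordered vertex tuple.

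The heart of the proof is then this Eulerian count. By the BEST theorem, the number of such ordered vertex tuples is $k(k-1)\cdot t_w(K_k^\ast)\cdot((k-2)!)^k$, where $t_w$ is the number of arborescences rooted at any vertex. By the directed matrix-tree theorem applied to the Laplacian $kI - J$ on $\mathbb{R}^{k-1}$, one finds $t_w(K_k^\ast) = k^{k-2}$ --- essentially Cayley's formula. Combining this count with the $(k-1)^{n-1}$ prefactor in the definition of $\Tr_k$ and the evaluation $a_e = 1/(k-1)!$, the various factorial factors telescope: $((k-2)!)^k/((k-1)!)^k = (k-1)^{-k}$ cancels against $(k-1)^{n-1}\cdot k(k-1)$ to produce $T_e = k^{k-1}(k-1)^{n-k}$, and dividing $\Tr_k(H)$ by $-k$ yields the claim. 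The main obstacle is simply the combinatorial bookkeeping of all the $(k-1)!$ and $(k-2)!$ factors so that the final expression collapses this cleanly; it is the appearance of the spanning-tree count $k^{k-2}$ via matrix-tree that makes the answer come out as neat as stated.
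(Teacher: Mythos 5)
Your proposal is correct and follows essentially the same route as the paper: reduce the codegree-$k$ coefficient to $-\Tr_k(H)/k$ via the vanishing of the lower traces, show that only operators with $d_i\in\{0,1\}$ supported on a single edge survive (the unique $k$-valent multigraph on $k$ edges being one edge repeated $k$ times), and evaluate the per-edge constant as a count of Eulerian circuits with a marked starting edge in the complete digraph on $k$ vertices via the BEST theorem. The only cosmetic difference is that you get the arborescence count $k^{k-2}$ from the matrix-tree determinant of $kI-J$ rather than the paper's direct appeal to Cayley's formula, and your factorial bookkeeping telescopes to the stated value exactly as in the paper.
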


Note that setting $k=2$ recovers the well-known fact that the codegree 2 coefficient of a graph's characteristic polynomial counts the number of edges; for higher uniformity, one obtains a multiple of the number of edges which depends on the number of vertices as well.

\begin{proof} First, recall that the codegree $k$ coefficient is
$$
P_k\left(-\frac{\Tr_1(H)}{1}, \ldots, -\frac{\Tr_k(H)}{k} \right ).
$$
By Corollary \ref{zerotraces}, all but the last parameter of this function are zero, so that the desired coefficient is $-\Tr_k(H)/k$ (by the decomposition of $P_k$ appearing in the proof of Corollary \ref{zerotraces}).  Recalling the definition of the trace,
\begin{align*}
\frac{\Tr_k(H)}{k} & = \frac{(k-1)^{n-1}}{k}\sum_{d_1+d_2+\ldots+d_n=k} \left(\prod_{i=1}^n\frac{\hat{f}_i^{d_i}}{(d_i(k-1))!}\right)\tr(A^{k(k-1)}). \end{align*}

We now show that most of the terms in sum in the above expression make no contribution to the trace. Indeed, the only operators that have surviving terms are those with each $d_i\leq 1,$ and where the indices $i$ with $d_i=1$ describe an edge of $H$.  To see this, note that the only $k$-valent subgraph on $k$ edges is a single edge repeated $k$ times. From the proof of Theorem \ref{kvalent}, we see that the only indices appearing in surviving terms must occur as a primary index in some term of $\prod_{i=1}^n \hat{f}_i^{d_i}$, and that the associated coefficient is non-zero only when these indices actually describe an edge.

This lets us further refine our expression for the codegree $k$ coefficient to include only those operators whose coefficients correspond to a {\em bona fide} edge. That is,
$$
\frac{\Tr_k(H)}{k} = \frac{(k-1)^{n-1}}{k\left((k-1)!\right)^k} \sum_{e\in H}\left(\prod_{i\in e}\hat{f}_i\right) \tr(A^{k(k-1)}).
$$
In fact, we can say substantially more. For a fixed edge $e\in H$, there is only one term of the operator $\prod_{i\in e}\hat{f}_i$ that has survivors: the one that arises from multiplying the derivatives whose variables correspond to the edge $e$ in each $\hat{f}_i$. That is,
$$
\left(\prod_{i\in e}\hat{f}_i\right) \tr(A^{k(k-1)}) = \left( \prod_{\substack{i,j\in e \\  i\neq j}} \frac{\partial}{\partial A_{i,j}}\right) \tr(A^{k(k-1)}).
$$
Evidently, this quantity is a constant which only  depends on the rows and columns of $A$ that are indexed by the elements of $e$, and does not depend upon the particular edge under consideration. So if we let $\bar{A}$ be the $k\times k$ matrix with new variable entries $\bar{A}_{ij}$ reindexed by their rows and columns, and denote the desired constant by $C$, we have that
$$
C =  \left( \prod_{\substack{ i,j\leq k\\i\neq j} } \frac{\partial}{\partial \bar{A}_{i,j}}\right)\tr(\bar{A}^{k(k-1)}).
$$
Thus, we can write
$$
\frac{\Tr_k(H)}{k} = \frac{(k-1)^{n-1}}{k\left((k-1)!\right)^k} C |E(H)|,
$$
and it only remains to determine the value of $C$.

Recall the form of the trace,
$$
\tr(\bar{A}^{k(k-1)}) = \sum_{i_1, i_2, \ldots, i_{k(k-1)}=1}^k \bar{A}_{i_1, i_2} \bar{A}_{i_2, i_3} \ldots \bar{A}_{i_{k(k-1)-1}, i_{k(k-1)}} \bar{A}_{i_{k(k-1)}, i_1},
$$
and observe that a term of this sum survives the differentiation operator only when it consists of a permutation of the $k(k-1)$ non-diagonal elements of $\bar{A}$.  Also note that such a term reduces to $1$ if it survives.  Hence $C$ is the number of trace terms that are orderings of the non-diagonal elements of $\bar{A}.$

To count the number of such terms, consider $D_k$, the complete labeled directed graph on $k$ vertices.  Let the edge $(i,j)$ be labeled by $\bar{A}_{ij}$.  Then the terms of $\tr(\bar{A}^{k(k-1)})$ that we are trying to count are in bijection with Eulerian cycles in $D_k$ where one edge (the first) is distinguished.  Hence each Eulerian cycle in $D_k$ corresponds to exactly $k(k-1)$ surviving trace terms, by choosing which edge to start from. Thus $C = k(k-1)(\# \text{ of Eulerian cycles in } D_k)$.

Counting Eulerian cycles in a directed graph can be done using so-called ``BEST Theorem'' (\cite{st41,be51}), which says that the number of such cycles in a directed graph $D$ is given by
$$
t_w(G)\prod_{v\in D}(\deg^{-}(v) -1)!,
$$
where $\deg^{-}(v)$ is the indegree of vertex $v$ (which must equal the outdegree for $D$ to be Eulerian), and $t_w(G)$ is the number of \emph{arborescences} of $D$ rooted at $w$.  (An \emph{arborescence} of $D$ at $w$ is a spanning tree rooted at $w$, with all edges pointing away from $w$.)  Remarkably, $t_w(G)$ does not depend on the choice of vertex $w$.

In our case, note that since $D_k$ is complete, every possible (undirected) spanning tree can be realized as a subgraph, and the requirement that the edges point away from $w$ gives exactly one possible way of orienting the edges for each such tree. Hence the number of arborescences of $D_k$ is precisely the number of labeled trees on $k$ vertices, which Cayley's Formula says is $k^{k-2}$ \cite{cay89,pru18}.

Hence we see that
$$
C = k(k-1) k^{k-2} (k-2)!^k,
$$
so that
\begin{align*}
\frac{\Tr_k(H)}{k} & =  \frac{(k-1)^{n-1}}{k\left((k-1)!\right)^k} C |E(H)| \\
& = \frac{(k-1)^{n-1}}{k\left((k-1)!\right)^k}  k(k-1) k^{k-2} (k-2)!^k |E(H)|  \\
& = k^{k-2}(k-1)^{n-k} |E(H)|,
\end{align*}
completing the proof.
\end{proof}

We can follow a similar procedure (with a less general calculation of the constant) to determine the next coefficient. Before doing so, we introduce a generalization of the triangle graph, a hypergraph which appears as the subgraph counted by the codegree $k+1$ coefficient.

\begin{definition} A \emph{simplex} in a hypergraph is a set of $k+1$ vertices where every set of $k$ vertices forms an edge. \end{definition}

In a graph, the simplex is a triangle. In a $3$-uniform hypergraph, the simplex is $4$ vertices, each set of three forming an edge. This hypergraph can be visualized as a tetrahedron in $\mathbb{R}^3$, where the facets of the tetrahedron are edges of the hypergraph.

\begin{lemma} \label{simplemma} The simplex is the only $k$-uniform $k$-valent multihypergraph with $k+1$ edges. \end{lemma}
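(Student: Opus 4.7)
The plan is to use a double-counting argument on vertex-incidences, together with the divisibility constraint imposed by $k$-valency, to pin down both the number of vertices and the distribution of edges. The total number of vertex-incidences in the multihypergraph is $k(k+1)$, since each of the $k+1$ edge-copies has size $k$. On the other hand, $k$-valency says that each vertex appearing in the multihypergraph must appear in a positive multiple of $k$ edge-copies. Because edges are sets, no vertex occurs more than once per edge-copy, so each vertex appears at most $k+1$ times. For $k\geq 2$, the only positive multiple of $k$ in $\{1,2,\ldots,k+1\}$ is $k$ itself, so every vertex appears in exactly $k$ edge-copies.

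Given this, the number of distinct vertices must be $k(k+1)/k = k+1$. Since every edge is a $k$-subset of this $(k+1)$-element vertex set, each edge omits exactly one vertex, which I will call its \emph{defect}. Dually, since each vertex lies in $k$ of the $k+1$ edge-copies, it is omitted by exactly one edge-copy. Writing the multiplicities as $\mu_1,\ldots,\mu_s$ with $\sum \mu_i = k+1$, the assignment sending an edge-copy to its defect must cover each vertex exactly once.

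The last step is to argue that all $k+1$ edge-copies are actually distinct and each has multiplicity one. If two edge-copies shared the same underlying $k$-set, they would share the same defect vertex, contradicting the bijection between edge-copies and defects. Hence the $k+1$ edge-copies have $k+1$ distinct underlying $k$-sets, each multiplicity is one, and the edges are precisely the $\binom{k+1}{k}$ complements of single vertices inside the $(k+1)$-element vertex set—that is, the simplex.

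I expect the only real care-point to be keeping the bookkeeping straight between the multiplicity count (edges-with-multiplicity, which totals $k+1$) and the underlying set of distinct edges; the rest is a short divisibility/pigeonhole argument. The case $k=2$ serves as a useful sanity check, where the result recovers that the only bivalent multigraph on three edges is the triangle.
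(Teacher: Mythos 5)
Your proof is correct and follows essentially the same route as the paper's: a double count of vertex--edge incidences combined with $k$-valency forces exactly $k+1$ vertices each of degree exactly $k$, and then labeling each edge by the single vertex it omits yields the simplex. Your explicit handling of multiplicities (distinct defects force distinct edge-copies) is a nice touch that the paper leaves implicit, but the argument is the same.
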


\begin{proof} Suppose $H$ is a $k$-uniform $k$-valent multihypergraph with $k+1$ edges. It is clear that $H$ must have at least $k+1$ vertices. Count pairs of the form (vertex, edge) where the vertex lies on the edge. Since there are $k+1$ edges, each containing exactly $k$ vertices, we count $k(k+1)$ pairs. On the other hand, if there were more than $k+1$ vertices in such a hypergraph, $k$-valency would imply that there were strictly more than $k(k+1)$ such pairs. Hence $H$ has exactly $k+1$ vertices.

Since each edge has $k$ vertices, we can label each edge by the vertex it does not contain. Each vertex must be used in exactly $k$ edges by $k$-valency. Hence for each vertex there is a unique edge not containing that vertex. Every possible $k$-set of vertices forms an edge, and $H$ is a simplex.
\end{proof}

\begin{theorem} \label{codegreeplus1}The codegree $k+1$ coefficient of the characteristic polynomial of a $k$-graph $H$  is $-C (k-1)^{n-k} (\# \text{ of simplices in } H), $ where $C$ is a constant depending only on $k$. \end{theorem}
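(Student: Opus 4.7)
My approach mirrors that of Theorem~\ref{codegreek}. The first step is to use the Morozov--Shakirov formula to reduce the problem to a single generalized trace. The codegree $k+1$ coefficient equals
$$P_{k+1}\!\left(-\Tr_1(H)/1,\,\ldots,\,-\Tr_{k+1}(H)/(k+1)\right),$$
which expands as a sum indexed by compositions $d_1+\cdots+d_m = k+1$ with each $d_i > 0$. By Corollary~\ref{zerotraces}, any term involving a part $d_i < k$ vanishes. Since $2k > k+1$ for $k \geq 2$, the only composition of $k+1$ with all parts $\geq k$ is the single part $(k+1)$, so the codegree $k+1$ coefficient equals $-\Tr_{k+1}(H)/(k+1)$.

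The second step expands the trace by its definition,
$$\Tr_{k+1}(H) = (k-1)^{n-1}\sum_{d_1+\cdots+d_n=k+1}\left(\prod_{i=1}^{n}\frac{\hat f_i^{\,d_i}}{(d_i(k-1))!}\right)\tr(A^{(k+1)(k-1)}),$$
and identifies the surviving terms. By Theorem~\ref{kvalent}, any nonzero contribution corresponds to a $k$-valent multi-subgraph of $H$ with $k+1$ edges, and Lemma~\ref{simplemma} then forces this multi-subgraph to be a simplex $S \subseteq H$. Since each vertex of $S$ lies in exactly $k$ edges, the bookkeeping in the proof of Theorem~\ref{kvalent} compels $d_v = 1$ for every $v \in V(S)$ and $d_u = 0$ otherwise; the associated primary-index assignments for $S$ are precisely the bijections $\pi\colon E(S) \to V(S)$ with $\pi(e) \in e$, i.e., derangements of $V(S)$.

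The third step is a symmetry argument: each simplex of $H$ contributes the same amount. For a fixed simplex $S$ and a fixed derangement $\sigma$ of $V(S)$, the corresponding operator $\prod_{v\in V(S)} \hat f_v$ (restricted to the single surviving term of each $\hat f_v$) acts on $\tr(A^{(k+1)(k-1)})$ to count Eulerian-circuit-style terms in a fixed $(k+1)$-vertex directed multigraph determined by $\sigma$. This count depends only on $\sigma$, not on the identities of the vertices in $V(S)$ or on the edges of $H$ outside $S$. Summing over derangements and simplices yields
$$\Tr_{k+1}(H) = (k-1)^{n-1}B_k \bigl(\#\text{simplices in }H\bigr)$$
for some $B_k \in \mathbb{Q}$ depending only on $k$. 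Setting $C = B_k(k-1)^{k-1}/(k+1)$ rewrites $\Tr_{k+1}(H)/(k+1)$ as $C(k-1)^{n-k}\cdot(\#\text{simplices in } H)$, completing the argument.

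The main obstacle is justifying the symmetry claim in the third step: although $\hat f_v = \sum_{e \ni v}\prod_{u \in e\setminus\{v\}} \partial/\partial A_{v,u}$ sums over \emph{every} edge of $H$ through $v$, one must verify that after the differentiation of $\tr(A^{(k+1)(k-1)})$, only those product terms which collectively select the edges of $S$ itself can survive. Once this localization is established, the constant is manifestly intrinsic to the abstract simplex structure. Unlike Theorem~\ref{codegreek}, an explicit evaluation of $C$ would require enumerating Eulerian circuits across all $D_{k+1}$ digraphs arising from the derangements of a $(k+1)$-set, which is precisely why the statement leaves $C$ as an unspecified $k$-dependent constant.
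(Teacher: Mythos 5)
Your proposal is correct and follows essentially the same route as the paper: reduce the codegree $k+1$ coefficient to $-\Tr_{k+1}(H)/(k+1)$ via Corollary~\ref{zerotraces}, identify the surviving terms with simplices of $H$ through $k$-valency and Lemma~\ref{simplemma}, and invoke the locality/symmetry of $\prod_{v\in s}\hat f_v\,\tr(A^{(k-1)(k+1)})$ to extract a constant depending only on $k$. The only (minor) divergence is that you deduce $d_v\le 1$ from the valency bookkeeping plus the simplex's degree structure, whereas the paper rules out any operator using some $\hat f_i$ twice by a direct pigeonhole on secondary indices; both work, and the ``localization'' you flag as the main obstacle is in fact already secured by your Theorem~\ref{kvalent}/Lemma~\ref{simplemma} argument.
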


\begin{proof} The codegree $k+1$ coefficient is $$P_{k+1}\left(-\frac{\Tr_1(H)}{1}, -\frac{\Tr_2(H)}{2}, \ldots , -\frac{\Tr_{k+1}(H)}{{k+1}}\right) $$

By Corollary \ref{zerotraces}, only the last two parameter values are non-zero. Recalling that monomials in $P_{k+1}$ have indices that sum (with multiplicity) to $k+1$, it is easy to see that the only term remaining in this expression is $-\frac{\Tr_{k+1}(H)}{{k+1}}$. Applying the definition of this trace,

$$
\frac{\Tr_{k+1}(H)}{{k+1}} = \frac{(k-1)^{n-1}}{{k+1}} \sum_{d_1+d_2+\ldots+d_n={k+1}} \left( \prod_{i=1}^n\frac{\hat{f}_i^{d_i}}{((k-1)d_i)!} \right) \tr(A^{(k-1)(k+1)})
$$

Next we show that only terms with each $d_i\leq 1$ can possibly contribute to the sum, again by showing that any term of $\mathcal{D} =\ \prod_{i=1}^n \hat{f}_i^{d_i}$ that uses some $\hat{f}_i$ at least twice can yield no surviving terms.

If $\hat{f}_i$ is used twice, then $i$ is used as the first index of a differentiation variable $A_{ij}$ at least $2(k-1)$  times. Therefore, in order for monomials in $\tr(A^{(k-1)(k+1)})$ to survive $\hat{f}_i$, $i$ must be used at least $2(k-1)$ times as a second index. Note that $i$ cannot be used as a second index of differentiation in $\hat{f}_i$, since $i$ and the second indices of differentiation of any term must form an edge of our hypergraph to have a non-zero coefficient in $\hat{f}_i$. As there are only $k-1$ other factors $\hat{f}_j$,  some $\hat{f}_j$ must use $i$ as a secondary index at least twice in a single monomial coefficient. However, as noted above, $j$ and the secondary indices must form an edge to have a nonzero coefficient in $\hat{f}_j$, so $i$ cannot be used twice.  Thus we see that any operator using some $\hat{f}_i$ more than once has no survivors, and the only operators that contribute are those where each exponent is one or zero.  That is,
$$
\frac{\Tr_{k+1}(H)}{{k+1}}  = \frac{(k-1)^{(n-1)}}{(k+1) \left[ (k-1)! \right ]^{(k+1)}} \sum_{i_1<i_2<\cdots<i_{k+1}} \prod_{j=1}^{k+1} \hat{f}_{i_j} \tr(A^{(k-1)(k+1)}).
$$

Fix vertices $i_1, i_2, \ldots i_{k+1}$. By Lemma \ref{simplemma}, the simplex is the only $k$-valent $(k+1)$-edge $k$-graph, so the only survivors of $\prod_{j=1}^{k+1} \hat{f}_{i_j}$ when applied to $\tr(A^{(k+1)(k-1)})$ arise from a simplex on vertices $i_1, i_2, \ldots , i_{k+1}$.
Hence, if we let $S$ be the family of all vertex sets of simplices of $H$,
$$
\frac{\Tr_{k+1}(H)}{{k+1}} = \frac{(k-1)^{(n-1)}}{(k+1)\left[(k-1)!\right]^{(k+1)}} \sum_{s\in S} \prod_{v\in s} \hat{f}_{v} \tr(A^{(k-1)(k+1)}).
$$
Note that $\prod_{v\in s} \hat{f}_{v} \tr(A^{(k-1)(k+1)})$ only depends on the rows and columns appearing in $s$, and so is independent of the choice of vertices and size of $A$. Thus $\prod_{v\in s} \hat{f}_{v} \tr(A^{(k-1)(k+1)}) = C^\prime$ is a constant depending on $k$. So we see that
$$
\frac{\Tr_{k+1}(H)}{{k+1}}  = \frac{(k-1)^{(n-1)}}{(k+1)\left[(k-1)!\right]^{(k+1)}} C^\prime \cdot (\# \text{ of simplices in } H).
$$

Gathering all of our constants, we find that the codegree $k+1$ coefficient of the characteristic polynomial of $H$ is

$$-C(k-1)^{n-k}(\# \text{ of simplices in } H),$$ as desired. \end{proof}

Clearly absent from the proof above is the determination of the constants $C = C_k$.  For graphs, it is well known \cite{big93} that the codegree 3 coefficient is $-2(\# \text{ of triangles in } G)$, i.e., $C_2=2$.

For $k>2$, this constant can be found by computing the codegree $k+1$ coefficient of the characteristic polynomial for the simplex\footnote{See Section 6 for a more detailed description of this and other computations we employ.}, and solving for $C$ in Theorem \ref{codegreeplus1}.  By carrying out such a calculation, we can show that
\begin{align*}
C_2 & = 2\\
C_3 & = 21 \\
C_4 & = 588 \\
C_5 & = 28230.
\end{align*}

One can, in principle, perform similar calculations for any fixed uniformity $k$ and fixed codegree $d$ to determine which $k$-valent multi-subgraphs on $d$ edges are being counted, and in what  multiplicity. In practice, the calculations become unwieldy for even modest values of uniformity and codegree. Instead, we hope that a characterization akin to that of the graph case, where the coefficients count ``sesquivalent'' subgraphs with coefficients based on their rank \cite{big93}, can be found for $k$-graphs.  Such a characterization will almost surely depend on a much better understanding of the symmetric hyperdeterminant.

\section{Spectra of Special Hypergraphs}
\subsection{General $k$-partite Hypergraphs}
A $k$-graph $H$ is called $k$-partite, or a $k$-cylinder, if the vertices of $H$ can be partitioned into $k$ sets so that every edge uses exactly one vertex from each set. The best known case is that of a $2$-cylinder, a.k.a.~a bipartite graph.  There are several proofs of the following characterization of bipartite graphs (q.v.~\cite{big93,cve80}).

\begin{theorem} A graph $G$ is bipartite if and only if its multiset spectrum is symmetric about the origin.\end{theorem}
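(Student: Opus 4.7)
The plan is to prove both directions using the adjacency matrix $A=A_G$ and its spectrum directly.

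For the forward implication, I would exploit the block structure of the adjacency matrix of a bipartite graph. If $G$ has bipartition $V = V_1 \sqcup V_2$ with $|V_1| = p$ and $|V_2| = q$, then ordering the vertices accordingly gives
\[
A = \begin{pmatrix} 0 & B \\ B^T & 0 \end{pmatrix}.
\]
Now consider the diagonal matrix $S = \operatorname{diag}(I_p, -I_q)$, which is its own inverse. A direct computation shows $S A S^{-1} = -A$, so $A$ and $-A$ are similar and hence have identical multiset spectra. This means $\lambda$ is an eigenvalue of $A$ with multiplicity $m$ if and only if $-\lambda$ is an eigenvalue with multiplicity $m$, which is exactly the symmetry claim.

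For the converse, I would pass from the spectrum to trace identities and then to combinatorial information. Assume $\operatorname{spec}(G)$ is symmetric about the origin. Then for every odd positive integer $r$, the power sum $\sum_i \lambda_i^r$ vanishes because the contributions of $\lambda_i$ and $-\lambda_i$ cancel in pairs. Since $\operatorname{tr}(A^r) = \sum_i \lambda_i^r$, we conclude $\operatorname{tr}(A^r)=0$ for all odd $r$. The standard interpretation of $\operatorname{tr}(A^r)$ as counting closed walks of length $r$ in $G$, combined with the fact that this count is a non-negative integer, forces $G$ to have no closed walk of odd length. In particular $G$ has no odd cycle, so $G$ is bipartite.

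The only mild subtlety is the very last implication: one must observe that any odd cycle produces a closed walk of odd length (trivially, traverse the cycle once), so the absence of odd closed walks rules out odd cycles. I do not anticipate any real obstacle here, since both directions are classical; the key conceptual point is that the conjugation-by-signs trick gives the easy direction, while power-sum identities translating into walk counts give the harder direction.
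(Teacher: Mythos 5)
Your proof is correct, but there is no proof in the paper to compare it against: the paper states this theorem as a known classical fact, citing \cite{big93,cve80}, and uses it only to motivate the hypergraph generalization (Theorem \ref{kpartite}). Still, the contrast with how the paper proves that generalization is worth noting. Your forward direction conjugates by the signature matrix $S=\operatorname{diag}(I_p,-I_q)$ to get $SAS^{-1}=-A$; that similarity trick is genuinely matrix-theoretic and has no analogue for order-$k$ hypermatrices, which is why the paper instead argues through the characteristic polynomial: every monomial in a coefficient of $\phi_H(\lambda)$ is $k$-valent (Theorem \ref{kvalent}), and for a $k$-cylinder this forces all nonzero coefficients into codegrees divisible by $k$, so $\phi_H(\lambda)=\lambda^r f(\lambda^k)$ and the multiset spectrum is invariant under $k$-th roots of unity. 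Your converse via $\operatorname{tr}(A^r)=\sum_i \lambda_i^r$ and closed-walk counts is likewise sound for graphs (including the final observation that an odd cycle yields an odd closed walk), but its mechanism also does not survive the passage to hypergraphs: the generalized traces $\Tr_d$ are far subtler objects, and indeed the paper exhibits a non-tripartite $3$-graph whose spectrum is invariant under third roots of unity, so the converse direction genuinely fails for $k>2$. In short, your argument is a correct, standard proof of the graph-case statement --- the sign-conjugation for the easy direction and power sums/walk counts for the hard one --- it simply is not an argument the paper reproduces, nor one whose ingredients generalize to the setting the paper is actually concerned with.
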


This theorem can be restated as saying that a graph is bipartite if and only if its (multiset) spectrum is invariant under the action of multiplication by any second root of unity. We generalize this to $k$-cylinders.

\begin{theorem} \label{kpartite} The (multiset) spectrum of a $k$-cylinder is invariant under multiplication by any $k$-th root of unity. \end{theorem}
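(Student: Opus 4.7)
My plan is to realize multiplication by a $k$-th root of unity $\omega$ as an explicit diagonal symmetry of the adjacency hypermatrix $\cA := \cA_H$, and then promote this into a polynomial identity on $\phi_H(\lambda)$ that forces the multiset of roots to be closed under $\lambda \mapsto \omega\lambda$.

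First, I would fix a primitive $k$-th root of unity $\omega$, let $V(H) = V_1 \sqcup \cdots \sqcup V_k$ be the $k$-partition, and define a diagonal matrix $D$ by $D_{ii} = \omega$ for $i \in V_1$ and $D_{ii} = 1$ for $i \notin V_1$. Consider the ``rescaled'' symmetric hypermatrix $\cA'$ whose entries are $a'_{i_1 \cdots i_k} = D_{i_1 i_1} D_{i_2 i_2} \cdots D_{i_k i_k}\, a_{i_1 \cdots i_k}$. Because every edge of a $k$-cylinder meets each part exactly once, $\prod_{i \in e} D_{ii} = \omega$ for every edge $e$, so $\cA' = \omega \cA$.

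Next, I would show that this rescaling leaves the characteristic polynomial unchanged, i.e., $\phi_{\cA'}(\lambda) = \phi_\cA(\lambda)$. Substituting $\bx = D\by$ in the system $P_i(\bx;\lambda) := \lambda x_i^{k-1} - (\cA \bx^{k-1})_i$ that defines $\phi_\cA$ and then scaling the $i$-th equation by $D_{ii}$, one checks that $D_{ii} P_i(D\by;\lambda) = \lambda y_i^{k-1} - (\cA' \by^{k-1})_i$, where the collapse $\lambda D_{ii}^k y_i^{k-1} \to \lambda y_i^{k-1}$ uses $D_{ii}^k = 1$. Combining (a) the homogeneity of the resultant (Theorem \ref{reshom}), which turns the per-row rescaling by $D_{ii}$ into a factor $(\det D)^{(k-1)^{n-1}}$, with (b) the standard substitution identity $\res(P_1 \circ D, \ldots, P_n \circ D) = (\det D)^{(k-1)^n} \res(P_1, \ldots, P_n)$ for the multipolynomial resultant, this yields $\phi_{\cA'}(\lambda) = (\det D)^{k(k-1)^{n-1}} \phi_\cA(\lambda)$. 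Since $(\det D)^k = \prod_i D_{ii}^k = 1$, the prefactor is $1$ and hence $\phi_{\cA'} = \phi_\cA$.

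Finally, I would invoke the elementary identity $\phi_{c\cA}(\lambda) = c^N \phi_\cA(\lambda/c)$ with $c = \omega$ and $N = n(k-1)^{n-1} = \deg \phi_\cA$, which is immediate from the fact that $(\mu, \bv)$ is an eigenpair of $\cA$ iff $(c\mu, \bv)$ is an eigenpair of $c \cA$. Combined with the previous step, this gives $\phi_\cA(\lambda) = \phi_{\omega \cA}(\lambda) = \omega^N \phi_\cA(\lambda/\omega)$, equivalently $\phi_\cA(\omega \mu) = \omega^N \phi_\cA(\mu)$ for all $\mu$. This polynomial identity says exactly that $\mu_0$ is a root of $\phi_\cA$ of multiplicity $m$ iff $\omega \mu_0$ is such a root; since $\omega$ generates the group of $k$-th roots of unity, the theorem follows. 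The main obstacle I anticipate is the resultant-scaling bookkeeping: one must carefully verify the substitution identity for the multipolynomial resultant in the form needed, and check that the two exponents arising from substitution and from row-rescaling sum to $k(k-1)^{n-1}$, the precise multiple of $k$ required so that $(\det D)^k = 1$ absorbs the entire prefactor.
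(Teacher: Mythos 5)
Your proposal is correct, but it takes a genuinely different route from the paper's. The paper argues coefficient-by-coefficient: invariance of the root multiset under multiplication by $k$-th roots of unity is equivalent to $\phi_H(\lambda)=\lambda^r f(\lambda^k)$, i.e., to every nonvanishing coefficient sitting at codegree $0 \pmod k$, and this is deduced from Theorem \ref{kvalent} ($k$-valency of the monomials of the characteristic polynomial, coming from the Morozov--Shakirov trace expansion): in a $k$-cylinder, counting the incidences of a $k$-valent multi-subgraph inside one partition class forces its number of edges, hence the codegree, to be divisible by $k$. You instead realize the action as a diagonal gauge symmetry $\cA \mapsto \omega\cA$ and push it through the resultant, and your bookkeeping is right: row-rescaling contributes $(\det D)^{(k-1)^{n-1}}$ by Theorem \ref{reshom}, the substitution contributes $(\det D)^{(k-1)^n}$, and $(k-1)^{n-1}+(k-1)^n = k(k-1)^{n-1}$, so the prefactor is a $k$-th power of $\det D$ and vanishes. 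Two caveats. First, the covariance $\res(F_1\circ D,\ldots,F_n\circ D)=(\det D)^{d_1\cdots d_n}\,\res(F_1,\ldots,F_n)$ is not among the resultant facts quoted in the paper, so you must cite or prove it (it is standard; see \cite{gkz94} or \cite{clo98}). Second, justifying $\phi_{c\cA}(\lambda)=c^{N}\phi_{\cA}(\lambda/c)$ by the eigenpair correspondence is insufficient for a multiset statement, since equal root sets do not force equal multiplicities; the identity instead follows directly from Theorem \ref{reshom} by writing $\lambda x_i^{k-1}-c(\cA\bx^{k-1})_i = c\left((\lambda/c)x_i^{k-1}-(\cA\bx^{k-1})_i\right)$ and pulling the factor $c$ out of each of the $n$ polynomial slots, which yields exactly $c^{n(k-1)^{n-1}}=c^{N}$. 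With those repairs your argument is complete; the paper's proof gives a bit more (the combinatorial fact that all nonzero coefficients of $\phi_H$ occur at codegrees divisible by $k$), while yours avoids the trace-formula machinery entirely and applies verbatim to any symmetric hypermatrix admitting such a diagonal symmetry.
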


Of course, this is only one direction of the theorem from the graph case. Unfortunately, the converse is true not for $k > 2$. Let $H$ be the unique $3$-uniform hypergraph on four vertices with three edges, i.e., a tetrahedron with one face removed.  It is easy to see that $H$ is not tripartite, but a calculation of the characteristic polynomial reveals
\begin{align*}
\phi_H(\lambda) = & \lambda^{11}(\lambda^3-12)(\lambda^3-1+2i)^3(\lambda^3-1-2i)^3,
\end{align*}
whose roots are symmetric under multiplication by any third root of unity.

It is worth mentioning that if one weakens this statement of the theorem to concern the spectrum as a set instead of a multiset, it be can proved easily using analytic methods.

\begin{proof} Let $H$ be a $k$-cylinder, and $\phi_H(\lambda)$ be its characteristic polynomial. We note that $\phi_H(\lambda)$ (or more generally, any univariate monic polynomial) has its multiset of roots invariant under multiplication by any $k$-th root of unity if and only if  $\phi_H(\lambda) = \lambda^rf(\lambda^k)$ for some polynomial $f$ and integer $r\geq 0$.  This condition is equivalent to having every non-zero coefficient of $\phi_H(\lambda)$ being a term with codegree $0 \pmod k$.

Suppose that there is a non-zero coefficient for a monomial of $\phi_H(\lambda)$ with codegree $i$. Then there exists a multi-subgraph $H^\prime$ of $H$ with $|E(H^\prime)| = i.$ By Theorem \ref{kvalent}, the monomial describing $H^\prime$ is also $k$-valent. Hence every vertex is used $0 \pmod k$ times. As $H$ is a $k$-cylinder, we can count the edges in $H^\prime$ by counting the vertices used in any single partition class. Since \emph{each} vertex of $H^\prime$ is used $0 \pmod k$ times, the number of vertices of $H^\prime$ in any partition class is also $0 \pmod k.$ Hence $i=|E(H^\prime)| \equiv 0 \pmod k$. \end{proof}

\subsection{One Edge Hypergraphs}

Finding the (set) spectrum of a single edge is simple.  Determining the multiplicities of the eigenvalues is a bit more difficult.  Nonetheless, with the help of Theorems \ref{codegreek} and \ref{kpartite}, we can calculate the characteristic polynomial of a single edge hypergraph for any uniformity.

\begin{theorem} \label{thm:oneedge} If $H$ is the $k$-graph with $k$ vertices and a single edge,
$$
\phi_H(\lambda) = \lambda^{k(k-1)^{k-1}-k^{k-1}}(\lambda^k-1)^{k^{k-2}}.
$$
\end{theorem}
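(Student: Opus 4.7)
The plan is to combine the $k$-partite symmetry of $H$ (Theorem \ref{kpartite}), a direct calculation of the set spectrum from the eigenvalue equations, and the codegree $k$ coefficient formula (Theorem \ref{codegreek}) to pin down $\phi_H$ completely.

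First, since $H$ has each vertex in its own partition class, it is trivially a $k$-cylinder, so Theorem \ref{kpartite} gives $\phi_H(\lambda) = \lambda^r f(\lambda^k)$ for some polynomial $f$ and integer $r \geq 0$. Next I will solve the eigenvalue equations directly. Labeling the edge $e = \{1,\ldots,k\}$, equation (\ref{eq:linkform}) becomes $\prod_{j \neq i} x_j = \lambda x_i^{k-1}$ for every $i \in [k]$. For a nonzero eigenvalue, a short chain argument rules out any $x_j = 0$ (if one coordinate vanishes, the equation forces a second to vanish, and then iterating forces all coordinates to vanish). With every $x_j \neq 0$, multiplying the $k$ equations yields $\lambda^k \prod_j x_j^{k-1} = \prod_j x_j^{k-1}$, hence $\lambda^k = 1$; conversely, each $k$-th root of unity $\zeta$ is realized by the eigenvector $\mathbf{x} = (\zeta, 1, \ldots, 1)$. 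Thus the nonzero roots of $\phi_H$ are exactly the $k$-th roots of unity, so $f(t) = (t-1)^m$ and $\phi_H(\lambda) = \lambda^r (\lambda^k - 1)^m$ for some $m$.

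Two linear equations then determine $r$ and $m$. The total degree of $\phi_H$ is $n(k-1)^{n-1} = k(k-1)^{k-1}$, obtained from the leading term $\det(\lambda \cI) = \res(\lambda x_1^{k-1}, \ldots, \lambda x_n^{k-1}) = \lambda^{n(k-1)^{n-1}}$ via the homogeneity in Theorem \ref{reshom}. This yields $r + km = k(k-1)^{k-1}$. For the second equation, the expansion $\lambda^r(\lambda^k - 1)^m = \lambda^{r+km} - m\lambda^{r + k(m-1)} + \cdots$ shows the codegree $k$ coefficient equals $-m$, while Theorem \ref{codegreek} identifies this coefficient as $-k^{k-2}(k-1)^{n-k}|E(H)| = -k^{k-2}$ (using $n=k$ and $|E(H)|=1$). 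Hence $m = k^{k-2}$ and $r = k(k-1)^{k-1} - k^{k-1}$, exactly matching the claimed formula.

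The principal obstacle is modest: one must carefully verify that the nonzero spectrum really is the full set of $k$-th roots of unity (both that no other values arise and that each such value is realized). Once that is in hand, everything else is bookkeeping against results already established in the paper, with the two-unknown linear system being forced by the degree count and a single low-degree coefficient identity.
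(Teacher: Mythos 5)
Your proposal is correct and follows essentially the same route as the paper: show directly from the eigenvalue equations that any nonzero eigenvalue satisfies $\lambda^k=1$, invoke Theorem \ref{kpartite} to get $\phi_H(\lambda)=\lambda^a(\lambda^k-1)^b$, and then determine $a$ and $b$ from the total degree $k(k-1)^{k-1}$ together with the codegree $k$ coefficient from Theorem \ref{codegreek}. Your extra verifications (realizing each root of unity via $(\zeta,1,\ldots,1)$ and justifying the degree via Theorem \ref{reshom}) are harmless additions to the same argument.
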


\begin{proof} Suppose $\lambda \neq 0$ is an eigenvalue of $H$.  Let $\bx$ be a corresponding eigenvector. If $\bx$ has a zero entry, then the eigenvalue equation (\ref{eigeq}) for a non-zero entry $x_i$ gives
$$
\lambda x_i^{k-1} = \prod_{j\neq i} x_j  = 0.
$$
This contradicts $\lambda\neq 0, $ so we see $\bx$ has no zero entries.  Multiplying all $k$ of the eigenvalue equations, we see
$$ \lambda^k\prod_{i=1}^kx_i^{k-1} = \prod_{i=1}^k \prod_{j \neq i} x_j = \prod_{i=1}^k x_i^{k-1}.$$ Hence we see that $\lambda^k=1$ for any non-zero eigenvalue $\lambda$.

$H$ is $k$-partite, so Theorem \ref{kpartite} implies that
\begin{equation} \label{ep}
\phi_H(\lambda) =  \lambda^a(\lambda^k-1)^{b}.
\end{equation}
Since the characteristic polynomial has degree $k(k-1)^{k-1}$, we also have that $a+kb = k(k-1)^{k-1}$.  The codegree $k$ coefficient in (\ref{ep}) is $-b$, while Theorem \ref{codegreek} implies that the codegree $k$ coefficient is  $-k^{k-2}$.  The formula for $\phi_H(\lambda)$ follows.
\end{proof}

\subsection{Cartesian Products}

Given two hypergraphs $G$ and $H$, the Cartesian product of $G$ and $H$ is the hypergraph $G \Box H$ with $V(G \Box H) = V(G) \times V(H)$ and
$$
E(G \Box H) = \{\{v\} \times e : v \in V(G), e \in E(H)\} \cup \{e \times \{v\} : e \in E(G), v \in V(H)\}.
$$

The following is a natural hypergraph analogue of a standard result from Spectral Graph Theory.

\begin{theorem} \label{cartprod} If $G$ and $H$ are $k$-graphs, and  $\lambda$ and $\mu$ are eigenvalues for $G$ and $H$ respectively, then $\lambda + \mu$ is an eigenvalue for $G\Box H.$\end{theorem}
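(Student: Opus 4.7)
The plan is to construct an explicit eigenvector for $G \Box H$ as a tensor-product of eigenvectors for $G$ and $H$, then verify the link-form eigenvalue equation (\ref{eq:linkform}) at every vertex. This mirrors the classical Kronecker-product argument from Spectral Graph Theory, but adapted to the multilinear setting.

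Let $\bx$ be an eigenvector of $G$ corresponding to $\lambda$ and $\by$ an eigenvector of $H$ corresponding to $\mu$. I would define $\bz \in \mathbb{C}^{V(G) \times V(H)}$ by $z_{(u,v)} = x_u y_v$. Since $\bx \neq 0$ and $\by \neq 0$, the vector $\bz$ is non-zero. Now fix a vertex $(u,v) \in V(G \Box H)$ and analyze the link $(G \Box H)((u,v))$: by the definition of the Cartesian product, every edge incident to $(u,v)$ is either of the form $\{u\} \times f$ for some $f \in E(H)$ containing $v$, or of the form $e \times \{v\}$ for some $e \in E(G)$ containing $u$. Removing $(u,v)$ in the first case leaves the set $\{(u, v') : v' \in f \setminus \{v\}\}$, and in the second case leaves $\{(u', v) : u' \in e \setminus \{u\}\}$.

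The key computation is then routine: the contribution of the ``horizontal'' link edges to the sum $\sum_{e' \in (G \Box H)((u,v))} z^{e'}$ factors as
\[
\sum_{f \in H(v)} \prod_{v' \in f \setminus \{v\}} x_u y_{v'} = x_u^{k-1} \sum_{f \in H(v)} y^f = x_u^{k-1}\,\mu\, y_v^{k-1},
\]
using the eigenvalue equation for $\by$, and analogously the ``vertical'' edges contribute $y_v^{k-1}\,\lambda\, x_u^{k-1}$. Adding these yields $(\lambda+\mu) x_u^{k-1} y_v^{k-1} = (\lambda+\mu) z_{(u,v)}^{k-1}$, which is exactly the eigenvalue equation for $G \Box H$ at $(u,v)$ with eigenvalue $\lambda + \mu$.

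I do not anticipate any real obstacle. The only mild subtlety is purely bookkeeping: one must observe that edges in the Cartesian product come in two disjoint families (``copies of an edge of $H$ at a vertex of $G$'' and vice-versa), so no cross-terms arise and the sum over $(G \Box H)((u,v))$ splits cleanly into the two geometric factors $x_u^{k-1}$ and $y_v^{k-1}$. Once this split is written down, the remainder is just invoking (\ref{eq:linkform}) for $G$ and $H$ separately.
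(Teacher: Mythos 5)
Your proposal is correct and follows essentially the same argument as the paper: the paper also takes the product vector $w_{(a,b)} = u_a v_b$, splits the link of $(a,b)$ in $G \Box H$ into the edges $\{a\}\times e$ with $e \in H(b)$ and $e \times \{b\}$ with $e \in G(a)$, factors out $u_a^{k-1}$ and $v_b^{k-1}$, and invokes the eigenvalue equations for $G$ and $H$ separately. No gaps; your bookkeeping observation about the two disjoint edge families is exactly the step the paper's computation carries out.
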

\begin{proof}
Let $G$ and $H$ be $k$-graphs on $n$ and $m$ vertices, respectively. Let $(\lambda, \bu)$ be an eigenpair for $G$, and let $(\mu, \bv)$ be an eigenpair for $H$.

Define $\mathbf{w} \in \mathbb{C}^{nm}$ to be a vector with entries indexed by pairs $(a,b) \in [n] \times [m]$ so that $w_{(a,b)} = u_a v_b$.
We claim that $\mathbf{w}$ is an eigenvector of $G \Box H$ with eigenvalue $\lambda + \mu$. To verify this, we simply check the eigenvalue equation (\ref{eigeq}) for an arbitrary vertex $(a,b)$ in $G \Box H$.
\begin{align*}
\sum_{e\in (G\Box H)(a,b)} \hspace{-.25in} w^e & = \sum_{\substack{\{a\}\times e\in (G\Box H)(a,b)\\ \text{with } e\in H(b)}} \hspace{-.25in} w^{\{a\}\times e} +\sum_{\substack{e\times \{b\}\in (G\Box H)(a,b)\\ \text{with } e\in G(a)}} \hspace{-.25in} w^{e\times \{b\}} \\
& = \sum_{e\in H(b)} u_a^{k-1}v^e + \sum_{e\in G(a)} u^e v_b^{k-1} \\
& = u_a^{k-1}\sum_{e\in H(b)} v^e + v_b^{k-1}\sum_{e\in G(a)} u^e  \\
& = u_a^{k-1}\mu v_b^{k-1} + v_b^{k-1} \lambda u_a^{k-1} \\
& = (\lambda+\mu)w_{(a,b)}^{k-1} \end{align*}
Since the vertex chosen was arbitrary, each eigenvalue equation holds, so that $(\lambda+\mu, \mathbf{w})$ is an eigenpair for $G\Box H.$
\end{proof}

Theorem \ref{cartprod} implies that
\begin{equation} \label{speceq}
\spec(G \Box H) \supseteq \spec(G) + \spec(H).
\end{equation}
Since $|\spec(G)| = n (k-1)^{n -1}$ and $|\spec(H)| = m (k-1)^{m -1}$ (as multisets) the multiset-sum consists of
$$
nm (k-1)^{n + m - 2} \leq nm (k-1)^{nm-1} = |\spec(G \Box H)|
$$
eigenvalues. Equality in the above only occurs when $k=2$, so the theorem leaves open the possibility that there exist more eigenvalues of a Cartesian product than just those arising from sums of eigenvalues from the factor hypergraphs.  Unfortunately, the reverse inclusion of (\ref{speceq}) is indeed false for $k > 2$, as shown in the next section.

\subsection{The Ultracube} An important and much-studied sequence of graphs are the hypercubes $Q^d$, i.e., the iterated Cartesian product of a single edge with itself.  We extend this definition to higher uniformity, and make some progress in describing its spectrum.

\begin{definition}
Let $E_k$ be the single-edge $k$-graph.  The $d$-dimension $k$-uniform \emph{ultracube} is defined by $Q^d_k = E_k^{\Box d}$.
\end{definition}

As an example of Theorem \ref{cartprod}, first recall that (as a set) $\spec(E_3) = \{0,1,\zeta_3,\zeta_3^{2} \}$, where $\zeta_3$ is a primitive third root of unity in $\mathbb{C}$.  Then Theorem \ref{cartprod} yields
$$
\{-1,0,1,2,\zeta_3,2\zeta_3,\zeta_3^2,2\zeta_3^2,-\zeta_3,-\zeta_3^2\} \subseteq \spec(Q^2_3).
$$
On the other hand, a computation of the  characteristic polynomial gives that
$$
\phi_{Q_3^2}(\lambda) = (\lambda^3 - 1)^{18}  (\lambda^3 - 2)^{27} (\lambda^3 + 1)^{54} \lambda^{549}  (\lambda^{3} - 2)^{486}.
$$
This reveals three additional eigenvalues $\{\sqrt[3]{2}, \sqrt[3]{2}\zeta_3, \sqrt[3]{2}\zeta_3^{2} \}$ for $Q_3^2$, which shows that the set inclusion in Theorem \ref{cartprod} (i.e., (\ref{speceq})) cannot be turned into an equality. Call the eigenvalues of a cartesian product that are not given by Theorem \ref{cartprod} \emph{sporadic} eigenvalues of the hypergraph.

An eigenvector for the real eigenvalue in this sporadic set of eigenvalues for $Q_3^2$ is illuminating, so we describe it explicitly. Let the vertices of $Q_3^2$ be indexed by pairs $(i,j)$ with $ i,j \in [3]$.   It is easy to verify that the vector given by $$x_{(1,1)}=\sqrt[3]{2},$$ $$ x_{(1,2)}=x_{(1,3)}=x_{(2,1)}=x_{(3,1)}=1,$$ $$x_{(2,2)}=x_{(2,3)}=x_{(3,2)}=x_{(3,2)}= 0 $$
is an eigenvector for $Q_3^2$ corresponding to the eigenvalue $\sqrt[3]{2}.$

The existence of the other two sporadic eigenvalues can be deduced by the following (simple) fact.

\begin{lemma}  If $G$ and $H$ are both $k$-partite $k$-graphs, then so is $G\Box H$. \end{lemma}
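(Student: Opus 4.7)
The plan is to produce an explicit $k$-partition of $V(G\Box H)$ from the given partitions of $V(G)$ and $V(H)$ using addition modulo $k$. Write $V(G)=V_0\sqcup\cdots\sqcup V_{k-1}$ and $V(H)=W_0\sqcup\cdots\sqcup W_{k-1}$ for the given $k$-partitions, and for each $v\in V(G)$ let $\iota_G(v)\in\{0,\ldots,k-1\}$ denote the index of the part containing $v$, with $\iota_H$ defined analogously on $V(H)$. The key observation is that in a $k$-partite $k$-graph the indices of the vertices of any edge form a complete system of residues modulo $k$, and adding a fixed constant to such a system again yields a complete system.

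First I would define the candidate partition of $V(G\Box H)=V(G)\times V(H)$ by
$$U_\ell=\{(v,w):\iota_G(v)+\iota_H(w)\equiv \ell\pmod{k}\},\qquad \ell\in\{0,1,\ldots,k-1\}.$$
Every pair $(v,w)$ has a unique value of $\iota_G(v)+\iota_H(w)$ modulo $k$, so $U_0,\ldots,U_{k-1}$ are a genuine partition of the vertex set.

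Next I would verify that each edge of $G\Box H$ contributes exactly one vertex to each $U_\ell$. By the definition of the Cartesian product, edges come in two flavors: edges of the form $\{v\}\times e$ with $v\in V(G)$ and $e\in E(H)$, and edges of the form $e\times\{v\}$ with $e\in E(G)$ and $v\in V(H)$. For an edge $\{v\}\times e$ with $e=\{w_1,\ldots,w_k\}$, the $k$-partiteness of $H$ tells us that $\iota_H(w_1),\ldots,\iota_H(w_k)$ is a permutation of $\{0,\ldots,k-1\}$. Adding the fixed constant $\iota_G(v)$ to each entry and reducing modulo $k$ gives another permutation of $\{0,\ldots,k-1\}$, so the $k$ vertices $(v,w_s)$ of the edge lie one each in $U_0,\ldots,U_{k-1}$. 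Edges of the form $e\times\{v\}$ are handled identically by swapping the roles of $G$ and $H$.

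There is no real obstacle here: the entire content of the lemma is the residue-system observation above, and no appeal to the spectral or resultant machinery developed earlier is required. The argument is purely structural, which is also why the conclusion extends in the obvious way to iterated Cartesian products such as the ultracube $Q_k^d$.
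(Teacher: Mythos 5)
Your argument is correct and is essentially the paper's proof: the paper also defines the coloring $(v,w)\mapsto c_1(v)+c_2(w)$ in $\mathbb{Z}_k$ and notes that verifying properness is routine, which is precisely the complete-residue-system computation you carry out explicitly. You have simply written out the verification the paper leaves to the reader.
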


\begin{proof} Let $c_1:G \rightarrow \mathbb{Z}_k$ and $c_2:H \rightarrow \mathbb{Z}_k$ induce vertex partitions of $G$ and $H$ respectively.  Define a coloring  $c: G\Box H \rightarrow \mathbb{Z}_k$ by $c((v,w)) = c_1(v)+c_2(w)$; it is a simple matter to check that this is a proper $k$-partition.    \end{proof}

Since $E_3$ is trivially tripartite, the lemma gives that $Q_3^2$ is tripartite as well. Thus Theorem \ref{kpartite} gives us the other two sporadic eigenvalues.

The reason that the eigenvector described above for the sporadic eigenvalue $\sqrt[3]{2}$ of $Q_3^2$ is interesting is that it can be generalized to give sporadic eigenvalues of $Q_k^d$ for any $k>2$ and $d>1$. Let the vertices of $Q_k^d$ be labeled by $(i_1, i_2, \ldots, i_d)$ where $i_j\in [k]$.
Define a vector by  $$ x_{(i_1, i_2, \ldots, i_d)} = \begin{cases} \sqrt[k]{d} & \text{ if } i_1=i_2=\ldots = i_d = 1 \\
1 & \text{ if exactly one } i_j\neq 1 \\
 0 & \text{ otherwise. } \end{cases} $$

It is again easy to verify that this vector is an eigenvector for $\lambda = \sqrt[k]{d}$.  Hence we obtain a recursive way to produce eigenvalues of $Q_k^d$ that gives more eigenvalues than simply applying Theorem (\ref{cartprod}).

\begin{theorem} \label{ultracube} Let $\zeta_k$ be a primitive $k$-th root of unity, and define $S = \{0\} \cup \{\zeta_k^j\}_{j=0}^{k-1}$.  Then
$$
\spec(Q_k^d) \supseteq (\spec(Q_k^{d-1}) + S) \cup \{\sqrt[k]{d}, \zeta_{k}\sqrt[k]{d}, \ldots, \zeta_k^{k-1}\sqrt[k]{d}\}
$$
\end{theorem}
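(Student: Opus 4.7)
The plan is to prove the two parts of the inclusion separately. For the first part, namely $\spec(Q_k^{d-1}) + S \subseteq \spec(Q_k^d)$, I would first recall from Theorem \ref{thm:oneedge} that the characteristic polynomial of the single edge $E_k$ is $\lambda^{k(k-1)^{k-1}-k^{k-1}}(\lambda^k-1)^{k^{k-2}}$, so its set of eigenvalues is exactly $S = \{0\} \cup \{\zeta_k^j\}_{j=0}^{k-1}$. Then write $Q_k^d = Q_k^{d-1} \Box E_k$ and invoke Theorem \ref{cartprod}: for any $\lambda \in \spec(Q_k^{d-1})$ and $\mu \in \spec(E_k) = S$, the sum $\lambda + \mu$ is an eigenvalue of $Q_k^d$.

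For the sporadic portion, I would exhibit the vector $\bx$ described explicitly just before the theorem statement as an eigenvector for $\sqrt[k]{d}$. Index the vertices of $Q_k^d$ by tuples $(i_1,\ldots,i_d)\in[k]^d$ and split vertices into three classes: the all-ones vertex $\mathbf{1}=(1,\ldots,1)$ with $x_\mathbf{1}=\sqrt[k]{d}$, the $d(k-1)$ vertices with exactly one non-$1$ coordinate each assigned $x=1$, and all remaining vertices assigned $x=0$. I would then verify the eigenvalue equation \eqref{eq:linkform} in each of these three cases. The all-ones case gives a sum over the $d$ coordinate-directions, each contributing $1$, totalling $d$, matched by $\sqrt[k]{d}\cdot (\sqrt[k]{d})^{k-1}=d$. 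The single-non-$1$-coordinate case is the one that requires the most care: for the edge lying in the unique ``special'' coordinate direction, the link monomial picks up one factor of $\sqrt[k]{d}$ (from $\mathbf{1}$) and $k-2$ factors of $1$; edges in the other $d-1$ directions yield monomials containing a vertex with two non-$1$ coordinates and hence vanish, so the total is $\sqrt[k]{d}=\lambda\cdot 1^{k-1}$. Finally, for any vertex with at least two non-$1$ coordinates, a direct inspection shows every link monomial contains at least one factor of zero (since altering a single coordinate of such a vertex cannot reduce the number of non-$1$ coordinates below one while simultaneously keeping the other non-$1$ coordinates intact, so the resulting $k-1$ vertices contain at most one with nonzero $\bx$-value and the rest force the product to vanish for $k\geq 3$), matching $\lambda\cdot 0^{k-1}=0$.

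Having established that $\sqrt[k]{d}$ is an eigenvalue, I would obtain the full sporadic orbit $\{\zeta_k^j\sqrt[k]{d}:0\leq j\leq k-1\}$ from Theorem \ref{kpartite}. This requires knowing that $Q_k^d$ is $k$-partite, which follows by induction on $d$: $E_k$ is trivially $k$-partite, and the lemma immediately preceding this theorem ensures Cartesian products preserve $k$-partiteness, so $Q_k^d = Q_k^{d-1}\Box E_k$ is $k$-partite. Theorem \ref{kpartite} then guarantees the spectrum is invariant under multiplication by $k$-th roots of unity, delivering the entire sporadic set.

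The bulk of the argument is routine once the candidate eigenvector is in hand; the main delicacy is the verification in Case 2 (single non-$1$ coordinate), where the contributions from the ``special'' direction and from the other $d-1$ directions must cancel out correctly, together with confirming that all vertices with $\geq 2$ non-$1$ coordinates really do produce vanishing link sums, since this is where one could easily miscount support.
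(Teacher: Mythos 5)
Your proposal is correct and follows essentially the same route as the paper: the first part via $Q_k^d = Q_k^{d-1}\Box E_k$, Theorem \ref{thm:oneedge}, and Theorem \ref{cartprod}, and the sporadic part via the explicit eigenvector for $\sqrt[k]{d}$ together with the $k$-partiteness lemma and Theorem \ref{kpartite} (the argument the paper carries out explicitly for $Q_3^2$ and leaves implicit in general). You simply spell out the case-by-case verification that the paper dismisses as ``easy to verify,'' including the correct observation that the vanishing in the third case needs $k\geq 3$, which matches the paper's standing restriction $k>2$ for sporadic eigenvalues.
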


\begin{proof} The first set in the union comes from the facts that $Q_k^d = Q_k^{d-1}\Box E_k$ and $S = \spec(E_k)$ by Theorem \ref{thm:oneedge}. The second set are those described in the  preceding paragraph.  \end{proof}

\subsection{Complete $k$-Cylinders}
In this section, we provide a description of the spectrum of the complete $k$-cylinder for any uniformity $k$ and any partition sizes.

Let $H$ be a $k$-cylinder with $A_1, \ldots, A_k$ as its partition sets, so that for any choice of  $v_1\in A_1, v_2\in A_2, \ldots, v_k\in A_k$, we have $\{v_1, v_2, \ldots, v_k\} \in E(H).$ We call $H$ a complete $k$-cylinder.

The eigenvalue equations $\lambda\mathbf{x}^{[k-1]} = \cA_H\mathbf{x}^{k-1}$ for such a hypergraph have a particularly simple  form. For each vertex $v\in A_i$, the corresponding equation is
\begin{equation} \label{compkparteigeq} \lambda x_v^{k-1} = \prod_{\substack{j\in [k]\\ j\neq i}}\left( \sum_{w\in A_j} x_w\right)\end{equation}

\begin{theorem} \label{compkpart} Let $H$ be a complete $k$-cylinder with parts $A_1, \ldots, A_k$, and let $\zeta_{k-1}$ be a primitive $(k-1)$-st root of unity.  Then $\lambda\neq 0$ is an eigenvalue of $H$ if and only if
\begin{equation} \label{compkparteq}
\lambda^k = \prod_{i=1}^k \left(\sum_{v\in A_i} \zeta_{k-1}^{\ell_v}\right)^{k-1}
\end{equation}
for some choice of integers $\ell_v \in \{0,\ldots,k-2\}$ for each $v \in V(H)$.
\end{theorem}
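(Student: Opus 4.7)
The plan is to exploit the fact that for any vertex $v$ in a given partition $A_i$, the right-hand side of the eigenvalue equation (\ref{compkparteigeq}) depends only on $i$, not on the specific $v$. Consequently, for any eigenpair $(\lambda, \bx)$, the value of $x_v^{k-1}$ is constant as $v$ ranges over $A_i$; call this common value $r_i$. Provided $r_i \neq 0$, we may fix a $(k-1)$-st root and write $x_v = \zeta_{k-1}^{\ell_v} y_i$ for $v \in A_i$, where $y_i^{k-1} = r_i$ and $\ell_v \in \{0, \ldots, k-2\}$.

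For the forward direction, first I would rule out the degenerate case: if $r_i = 0$ for some $i$, then $x_v = 0$ for all $v \in A_i$, so $\sum_{w \in A_i} x_w = 0$; this factor then kills the right-hand side of (\ref{compkparteigeq}) for every $v$ in a different part $A_j$, forcing $\lambda x_v^{k-1} = 0$ throughout $A_j$ and hence $\bx = 0$, a contradiction. Thus every $y_i \neq 0$, and substituting $x_v = \zeta_{k-1}^{\ell_v} y_i$ into (\ref{compkparteigeq}) (using $\zeta_{k-1}^{(k-1)\ell_v} = 1$) collapses the $|A_i|$ equations indexed by $A_i$ to the single relation
\[
\lambda y_i^{k-1} = \prod_{j \neq i} y_j S_j, \qquad S_j := \sum_{v \in A_j} \zeta_{k-1}^{\ell_v}.
\]
Multiplying these $k$ equations together and cancelling the nonzero factor $\prod_j y_j^{k-1}$ yields $\lambda^k = \prod_{j=1}^k S_j^{k-1}$.

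For the converse, I would work backwards: assume the displayed formula holds with $\lambda \neq 0$; then each $S_j$ must be nonzero. Setting $T_i = \prod_{j \neq i} S_j$, I would choose $k$-th roots $y_i$ of $T_i/\lambda$ satisfying the side condition $\prod_i y_i = 1$, which is consistent because $\prod_i (T_i/\lambda) = \prod_j S_j^{k-1}/\lambda^k = 1$ (any initial choice of roots differs from a valid one by a single global $k$-th root of unity on one coordinate). Defining $x_v = \zeta_{k-1}^{\ell_v} y_i$ for $v \in A_i$ then gives a nonzero vector, and a one-line verification shows
\[
\lambda y_i^{k-1} = \frac{\lambda y_i^k}{y_i} = \frac{T_i}{y_i} = \Bigl(\prod_{j \neq i} y_j\Bigr)\Bigl(\prod_{j \neq i} S_j\Bigr) = \prod_{j \neq i} y_j S_j,
\]
which is precisely the reduced eigenvalue equation on $A_i$, so $(\lambda, \bx)$ is an eigenpair.

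The main obstacle is the bookkeeping in the converse direction: one must select the $y_i$ among multiple $k$-th roots so that the product constraint $\prod_i y_i = 1$ is satisfied, and one must be certain the forward argument does not quietly assume all parts are nontrivial. Once one verifies that $\lambda \neq 0$ forces every $r_i$ (and hence every $S_j$) to be nonzero, both directions reduce to a clean manipulation, with Theorem \ref{kpartite} providing a sanity check that the $k$ values of $\lambda$ arising from a single choice of $(\ell_v)$ come together under multiplication by $k$-th roots of unity.
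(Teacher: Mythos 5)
Your proof is correct, and while the skeleton matches the paper's (reduce the eigenvalue equations class by class, observe that entries within a part agree up to $(k-1)$-st roots of unity, and show full support when $\lambda\neq 0$), the two directions are handled differently enough to be worth noting. For sufficiency, the paper verifies only a single distinguished root $\lambda = \prod_i m_i^{(k-1)/k}$ with eigenvector $x_v = \zeta_{k-1}^{\ell_v} m_i^{-1/k}$ and then invokes Theorem \ref{kpartite} to sweep in the other $k$-th-root multiples; you instead build an eigenvector for an arbitrary root of (\ref{compkparteq}) by choosing $k$-th roots $y_i$ of $T_i/\lambda$ normalized so that $\prod_i y_i = 1$ (a legitimate adjustment, since multiplying one $y_i$ by a $k$-th root of unity preserves $y_i^k = T_i/\lambda$), which makes the sufficiency argument self-contained and exhibits explicit eigenvectors for all $k$ roots rather than relying on the multiset symmetry of the spectrum. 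For necessity, the paper normalizes $x_1 = 1$, solves $a_j^k = m_1/m_j$, and raises one equation to the $k$-th power, whereas you multiply all $k$ reduced equations $\lambda y_i^{k-1} = \prod_{j\neq i} y_j S_j$ and cancel $\prod_j y_j^{k-1}$; this is more symmetric and slightly cleaner, and your zero-coordinate argument (a vanishing part kills the product for every other part, forcing $\bx = 0$) is the same in substance as the paper's. The only caveat, which you flag yourself, is the tacit assumption that every part is nonempty, but the paper makes the same implicit assumption, and the degenerate case is vacuous anyway since an empty part forces $S_i = 0$ and hence no nonzero $\lambda$ on either side of the equivalence.
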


\begin{proof} To show sufficiency, note that by Theorem \ref{kpartite}, it suffices to prove that one of the $k$ roots of the above equation is an eigenvalue.  Specifically, for $i \in [k]$ let $m_i = \sum_{v\in A_i} \zeta_{k-1}^{\ell_v}$, and for each such $i,$ we fix one of the $k$ values of $m_i^{1/k}.$  Then  $\lambda = \prod_{i=1}^k m_i^{(k-1)/k}$ is one of the solutions to (\ref{compkparteq}).

For  $v\in A_i$, we let $x_v = \zeta_{k-1}^{\ell_v}m_i^{-1/k}$.  We  verify that the vector defined thusly is an eigenvector for $\lambda$ by checking the eigenvalue equations (\ref{compkparteigeq}):
\begin{align*}
\prod_{\substack{j\in [k]\\ j\neq i}}\left( \sum_{w\in A_j} x_w\right)& = \prod_{\substack{j\in [k]\\ j\neq i}}\left( \sum_{w\in A_j} \zeta_{k-1}^{\ell_w}m_j^{-1/k} \right) \\
& = \prod_{\substack{j\in [k]\\ j\neq i}}m_j^{-1/k}\left( \sum_{w\in A_j} \zeta_{k-1}^{\ell_w}\right) \\
& = \prod_{\substack{j\in [k]\\ j\neq i}} m_j^{(k-1)/k} \\
& =  \left(\prod_{j=1}^k m_j^{(k-1)/k}\right)m_i^{(-k+1)/k} \\
& =  \left(\prod_{j=1}^k m_j^{(k-1)/k}\right) \left(\zeta_{k-1}^{\ell_v} m_i^{-1/k}\right)^{k-1} \\
& = \lambda x_v^{k-1}.
\end{align*}
To establish necessity, let $(\lambda, \mathbf{x})$ be an eigenpair for $H$ with $\lambda \neq 0$.  Note that for any two  vertices in the same class $A_i$, the defining eigenvalue equation is the same. Hence we see that $x_v = \zeta_{k-1}^\ell x_w $ for some $0\leq \ell < k-1$ whenever $w,v$ are vertices in the same class. In particular, if $\mathbf{x}$ has any zero coordinate, it is zero on some entire class $A_r$. Then $\sum_{v\in A_r} x_v = 0.$ An eigenvector has to have some non-zero coordinate, say $x_v$, whose vertex must then lie in some class $A_s$ with $s\neq r$.  If we look at the defining eigenvalue equation for $x_v$, we see that
$$
\lambda x_v^{k-1} =   \prod_{\substack{j\in [k]\\ j\neq s}}\left( \sum_{w\in A_j} x_w\right) =0.
$$
Since $x_v^{k-1}$ is non-zero, we conclude that $\lambda = 0,$ a contradiction. Hence any eigenvector for a non-zero eigenvalue of $H$ must have all non-zero entries.

As $\mathbf{x}$ has full support, we can assume without loss of generality that $x_1\in A_1$ with $x_1=1.$ Let $a_1=1,$   and for each partition class $A_i$, $i \neq 1$, choose a vector entry $x_{v_i}$ with $v_i \in A_i$, and define $a_i = x_{v_i}.$ Then for each entry $x_v$ with $v\in  A_i$, we have a unique representation $x_v = a_i \zeta_{k-1}^{\ell_v}$ where $0\leq \ell_v< k-1.$ Now if we let $m_i  = \sum_{v\in A_i} \zeta_{k-1}^{\ell_v}$, we have
$$
\sum_{v\in A_i} x_v = a_i m_i.
$$
Note that if $m_i =0$, our eigenvalue equations would give that any class other than $A_i$ has all corresponding entries in $\mathbf{x}$ equal to zero, which contradicts $\mathbf{x}$ having full support. Hence $m_i \neq 0$. From the eigenvalue equation for $x_1$, we have
\begin{equation} \label{x1eq}
\lambda = \prod_{i=2}^k a_i m_i.
\end{equation}
For a vertex in class $A_j$, the eigenvalue equation is
\begin{equation} \label{xjeq}
\lambda a_j^{k-1} = \prod_{i\neq j} a_i m_i.
\end{equation}
From (\ref{xjeq}), we see that
\begin{align*}
a_j^k &  = \frac{a_j m_j \prod_{i\neq j} a_i m_i}{m_j \lambda} \\
& = \frac{ \prod_{i =1}^k a_i m_i}{m_j \prod_{i=2}^k a_i m_i} \\
& = \frac{m_1}{m_j}.
\end{align*}
If we raise both sides of (\ref{x1eq}) to the $k$-th power, we find
\begin{align*}
\lambda^k & = \prod_{i=2}^k a_i^km_i^k \\
& = \prod_{i=2}^k \frac{m_1}{m_i} m_i^k \\
& = \prod_{i=1}^k m_i^{k-1},
\end{align*}
completing the proof.
\end{proof}

It is worth noting that this argument provides a different proof from the ``standard one'' for the spectrum of a complete bipartite graph, although it does lose any information concerning the multiplicities of eigenvalues. To be precise, if we  let $k=2$, $|A_1| =m$, $|A_2|=n$ (with $m+n>2$), then the only $(k-1)$-st root of unity is $\zeta_{k-1} = 1$, and so our theorem gives that $\lambda^2 = mn$.  Therefore, the (set) spectrum of the complete bipartite graph $K_{m,n}$ is $\{- \sqrt{mn}, 0, \sqrt{mn}\}.$

For $k=3$ the theorem also gives a fairly succinct description of the spectrum of a complete tripartite hypergraph.
\begin{corollary} Let $H$ be the complete 3-cylinder with partition sizes $n_1$, $n_2$, and $n_3$, and for $i \in [3]$, let $S_i = \{n_i-2m \:|\: m\in \mathbb{N} \text{ and }  m <n_i/2\}.$   Then
$$
\spec(H) = \{0\} \cup \{ \zeta_3^j\left( s_1s_2s_3\right)^{2/3} |\: 0\leq j <3 \text{ and } s_i \in S_i\}.
$$
\end{corollary}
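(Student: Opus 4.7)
The plan is to specialize Theorem \ref{compkpart} to the case $k=3$, characterize the products appearing on the right-hand side of (\ref{compkparteq}) in terms of the sets $S_i$, and verify separately that $0$ is always an eigenvalue (since Theorem \ref{compkpart} explicitly excludes it).

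First, I would set $k=3$ in Theorem \ref{compkpart}. Then the relevant root of unity is $\zeta_{k-1} = \zeta_2 = -1$, so the inner sum becomes
$$m_i := \sum_{v\in A_i} (-1)^{\ell_v} = \#\{v : \ell_v = 0\} - \#\{v : \ell_v = 1\}.$$
As the $\ell_v$ range over all assignments in $\{0,1\}^{A_i}$, the quantity $m_i$ takes exactly the values $n_i - 2m$ for $m = 0, 1, \ldots, n_i$; equivalently, $m_i$ runs over all integers in $[-n_i, n_i]$ having the same parity as $n_i$. With this setup, (\ref{compkparteq}) specializes to $\lambda^3 = (m_1 m_2 m_3)^2$.

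Next I would translate this into the claimed description. For $\lambda \neq 0$ we need every $m_i \neq 0$, so each $|m_i|$ is a positive integer of the same parity as $n_i$ not exceeding $n_i$; by the definition of $S_i$ in the statement (note the strict inequality $m < n_i/2$, which removes $m_i = 0$ precisely when $n_i$ is even), the set of achievable values of $|m_i|$ is exactly $S_i$. Conversely, any choice of $s_i \in S_i$ is realized by taking an appropriate number of $\ell_v = 1$'s in $A_i$. Hence the set of nonzero values of $\lambda^3$ is exactly $\{(s_1 s_2 s_3)^2 : s_i \in S_i\}$, and the three cube roots of such a value are $\zeta_3^j (s_1 s_2 s_3)^{2/3}$ for $j \in \{0,1,2\}$, yielding the nonzero part of $\spec(H)$.

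Finally, I would verify that $0 \in \spec(H)$ by exhibiting an explicit null eigenvector. If some $|A_i| \geq 2$, pick distinct $v_1, v_2 \in A_i$ and set $\bx = e_{v_1} - e_{v_2}$; then $\sum_{w \in A_j} x_w = 0$ for every $j$, so (\ref{compkparteigeq}) is satisfied with $\lambda = 0$. If instead $|A_1| = |A_2| = |A_3| = 1$, then $H$ is the single edge $E_3$, and Theorem \ref{thm:oneedge} gives that $0 \in \spec(E_3)$. There is no real obstacle here; the only point requiring care is matching the indexing of $S_i$ (positive values with the correct parity) to the possible nonzero values of $m_i^2$, since the squaring and the strict inequality in the definition of $S_i$ conspire to make each value of $\lambda^3$ correspond to a unique choice of $(s_1, s_2, s_3)$.
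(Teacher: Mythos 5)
Your argument is correct and is exactly the intended one: the paper states this corollary without a separate proof, as the immediate specialization of Theorem \ref{compkpart} to $k=3$ (where $\zeta_{k-1}=-1$, so $m_i$ ranges over integers of the same parity as $n_i$ in $[-n_i,n_i]$ and $\lambda^3=(m_1m_2m_3)^2$), which is precisely what you carry out, and your explicit verification that $0\in\spec(H)$ is a sensible extra step the paper leaves implicit. The only quibble is your closing remark about each value of $\lambda^3$ corresponding to a \emph{unique} triple $(s_1,s_2,s_3)$ -- distinct triples can give equal products -- but nothing in the set equality depends on that.
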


\subsection{Complete $k$-Graphs}

The complete $k$-graph on $n$ vertices is an obvious next candidate for which to compute the spectrum.  We obtain a complete characterization of the (set) spectrum in the first unknown case $k=3$.  Unfortunately, our methods do not lead to a complete characterization in cases of uniformity greater than $3$, but they do reveal an interesting connection to the elementary symmetric polynomials.

As in the case of complete $k$-cylinders, the eigenvalue equations for a complete $k$-graph have particularly simple form. For any vertex $v$, the eigenvalue equation is given by

\begin{equation} \label{compeigeq1}
\lambda x_v^{k-1} = \sum_{e\in \binom{[n]\setminus \{v\}}{k-1}} \hspace{-.3cm} x^e.
\end{equation}
Note, however, that for any $r\geq 1$,
$$
\sum_{e\in \binom{[n]\setminus \{v\}}{r}} \hspace{-.3cm} x^e = \sum_{e\in \binom{[n]}{r}} x^e - x_v\hspace{-.3cm}\sum_{e\in \binom{[n]\setminus \{v\}}{r-1}} \hspace{-.3cm}x^e.
$$
Applying this identity repeatedly to the eigenvalue equation above, we obtain
$$
\lambda x_v^{k-1} = \sum_{e\in \binom{[n]}{k-1}}\hspace{-.2cm} x^e -x_v\hspace{-.2cm}\sum_{e\in \binom{[n]}{k-2}} \hspace{-.2cm}x^e+x_v^2\hspace{-.2cm}\sum_{e\in \binom{[n]}{k-3}} \hspace{-.2cm}x^e-\ldots +(-1)^{k-1}x_v^{k-1}.
$$
Notice that $\sum_{e\in \binom{[n]}{r}} x^e$ is the sum over all square-free monomials of degree $r$ in $n$ variables, i.e.,  precisely the degree $r$ elementary symmetric polynomial in $n$ variables. We denote this polynomial by $E_{r}(\mathbf{x})$ (letting $E_0 \equiv 1$) and use it to rewrite the eigenvalue equation more succinctly as

\begin{equation} \label{compeigeq}\lambda x_v^{k-1} = E_{k-1}(\mathbf{x})-x_v E_{k-2}(\mathbf{x})+x_v^2 E_{k-3}(\mathbf{x}) - \ldots +(-1)^{k-1} x_v^{k-1}.  \end{equation}

\begin{theorem} \label{complete3unif} The complete $3$-uniform hypergraph on $n$ vertices has eigenvalues $0, 1, \binom{n-1}{2}$, and at most $2n$ others, which can be found by substituting the roots of one of $n/2$ univariate quartic polynomials into a particular quadratic polynomial. In principle, the roots can be obtained from an explicit list of $O(n)$ formulas. \end{theorem}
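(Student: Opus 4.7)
The plan is to exploit the fact that, since $K_n^{(3)}$ is vertex- and edge-transitive, the $n$ eigenvalue equations are all instances of a single quadratic in $x_v$. From (\ref{compeigeq}) with $k=3$, every vertex satisfies $(\lambda - 1)x_v^2 + E_1(\mathbf{x})x_v - E_2(\mathbf{x}) = 0$, a quadratic whose coefficients depend on $\mathbf{x}$ and $\lambda$ but not on $v$. Hence each coordinate of any eigenvector takes one of at most two values, which I will call $\alpha$ and $\beta$. The proof then partitions eigenvectors according to the structure of $\{\alpha,\beta\}$.

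First I would verify the three named eigenvalues. If $\mathbf{x}$ is a constant vector, regularity of $K_n^{(3)}$ together with Theorem \ref{degreebound} gives $\lambda = \binom{n-1}{2}$. If $\lambda = 1$, the equation collapses to $E_1(\mathbf{x})x_v = E_2(\mathbf{x})$, and after excluding the constant subcase this forces $E_1 = E_2 = 0$; such vectors exist for $n \geq 3$ (e.g.\ $(1,\zeta_3,\zeta_3^2,0,\ldots,0)$), so $1 \in \spec(H)$. If $\lambda \neq 1$ and $\mathbf{x}$ has a zero coordinate, a short case analysis using that the constant term of the quadratic is $-E_2(\mathbf{x})$ forces $E_2(\mathbf{x}) = 0$, which in turn forces at most one nonzero coordinate, at which point the eigenvalue equation yields $\lambda = 0$.

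All remaining eigenvalues arise when $\mathbf{x}$ takes two distinct nonzero values $\alpha, \beta$ with multiplicities $p$ and $q = n-p$ respectively. The $\alpha \leftrightarrow \beta$ symmetry allows me to assume $1 \leq p \leq q$, which produces exactly $\lfloor n/2 \rfloor$ configurations. Using the scaling freedom in eigenvectors, I rescale so that $\alpha = 1$; the $\alpha$-vertex and $\beta$-vertex equations then read
\[(\lambda - 1) + (p + q\beta) - E_2 = 0, \qquad (\lambda - 1)\beta^2 + (p+q\beta)\beta - E_2 = 0,\]
with $E_2 = \binom{p}{2} + pq\beta + \binom{q}{2}\beta^2$. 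The first equation expresses $\lambda$ as a quadratic polynomial in $\beta$; substituting this expression for $\lambda$ into the second and collecting terms yields a polynomial of degree $4$ in $\beta$, namely
\[\binom{q}{2}\beta^4 + q(p-1)\beta^3 + \tfrac{(p-q)(n-3)}{2}\beta^2 - p(q-1)\beta - \binom{p}{2} = 0.\]
Each of these $\lfloor n/2 \rfloor$ quartics contributes at most $4$ eigenvalues via the quadratic back-substitution, for a total of at most $2n$ additional eigenvalues.

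The main technical hurdle will be making the case analysis airtight: ensuring that every eigenvector is captured by exactly one of the enumerated cases and that degenerate roots of the quartic (notably $\beta = 0$, which collapses into the zero-coordinate case, or $\beta = 1$, which collapses into the constant case) are correctly identified as already-counted eigenvalues rather than spurious new ones, so that the $\alpha \leftrightarrow \beta$ symmetry does not cause overcounting. Once the quartics are in hand, the ``explicit $O(n)$ formulas'' promised by the theorem follow from the classical solvability of the quartic in radicals.
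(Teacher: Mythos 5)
Your proposal is correct and follows essentially the same route as the paper's proof: each coordinate of an eigenvector with $\lambda\notin\{0,1\}$ is a root of the single quadratic $(\lambda-1)y^2+E_1(\mathbf{x})y-E_2(\mathbf{x})$, so it takes at most two values; the constant case gives $\binom{n-1}{2}$, and in the two-value case with multiplicity $t\leq n/2$ eliminating $\lambda$ between the two eigenvalue equations yields exactly the paper's quartic (your quartic coincides with the paper's $P(c)$ under $q=t$, $p=n-t$), giving at most $4\cdot n/2=2n$ further eigenvalues. The only cosmetic differences are that you verify $0$ and $1$ are actually eigenvalues and reach the full-support reduction via the quadratic's constant term rather than by summing the eigenvalue equations, both of which are fine.
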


By ``explicit'', we mean that it is indeed possible to write down a list of $O(n)$ expressions involving $O(1)$ algebraic operations that give the roots in terms of $n$ and which are each $O(\log n)$ symbols long.  However, doing so yields absurdly long expressions that are neither useful nor enlightening, so we omit them.

\begin{proof} We first claim that any eigenpair $(\lambda, \mathbf{x})$ where $\lambda\notin \{0,1\}$ must have the property that $\mathbf{x}$ has full support. To see this, let $\mathbf{x}$ be an eigenvector without full support; we show $\lambda^2 = \lambda$.

Equation (\ref{compeigeq}) applied to a vertex $w$ so that $x_w = 0$ yields $E_2(\mathbf{x}) = 0$.  Summing equations (\ref{compeigeq1}) over all vertices yields
$$
\lambda\left( \sum_{v=1}^nx_v^2\right) = (n-2)\sum_{e\in \binom{[n]}{2}}x^e= (n-2)E_2(\mathbf{x}) = 0.
$$
Since $\lambda \neq 0$, the sum of the squares of the entries of $\mathbf{x}$ is zero. From this, it follows that
$$
\left(\sum_{v=1}^nx_v\right)^2 = \sum_{v=1}^nx_v^2 + 2\sum_{e\in \binom{[n]}{2}}x^e = 0,
$$
so that $\sum_{v} x_v =E_1(\mathbf{x}) = 0.$ Thus equation (\ref{compeigeq}) reduces to
$$
\lambda x_v^2 = x_v^2
$$
for each $v$, from which the claim follows, since there must be some $v$ so that $x_v \neq 0$.

Now let $(\lambda, \mathbf{x})$ be an eigenpair with $\lambda\notin\{0,1\}$, so that $\mathbf{x}$ has full support. Consider the polynomial $g\in \mathbb{C}[y] $ given by $g(y) = (1-\lambda)y^2-yE_1(\mathbf{x})+E_2(\mathbf{x}).$  Noting that every coordinate of $\mathbf{x}$ is root of $g$, we see that $\mathbf{x}$ has at most two distinct entries. If all entries are the same, a quick calculation shows that the corresponding eigenvalue is $\binom{n-1}{2}.$ So, assume there are exactly two entries. By rescaling the vector, we may assume without loss that at least half of the entries are $1$, and we denote the other entry by $c$. Let $t \leq n/2$ be the number of times $c$ appears in $\mathbf{x}$. Then there are only two eigenvalue equations: one for the entry $1$,
\begin{equation} \label{comp3uniflambda}
\lambda = \binom{t}{2}c^2 + t(n-t-1)c + \binom{n-t-1}{2},
\end{equation}
and another for the entry $c$,
\begin{equation}
\lambda c^2 = \binom{t-1}{2}c^2 + (t-1)(n-t)c + \binom{n-t}{2}.
\end{equation}
Substituting the first into the second yields the quartic polynomial
\begin{align*}
P(c) & = \binom{t}{2}c^4 + t(n-t-1)c^3 + \left(\binom{n-t-1}{2} - \binom{t-1}{2}\right) c^2 \\
& \qquad - (t-1)(n-t)c - \binom{n-t}{2}.
\end{align*}
Then any non-trivial root $c_0$ of $P(c)$ and the value $\lambda_0$ obtained by substituting $c_0$ for $c$ in (\ref{comp3uniflambda}) yields an eigenpair $(\lambda_0, \mathbf{x_0}),$ where $c_0$ appears $t$ times in $\mathbf{x_0}$ and all other entries are 1. As $t$ takes on at most $n/2$  values, and each leads to at most 4 eigenvalues, there can be no more than $2n$ additional eigenvalues.
\end{proof}

\section{Conclusion and Open Problems}

Because Spectral Graph Theory has been such a rich font of interesting mathematics, the list of natural ``next'' questions about hypergraph spectra is virtually endless.  Here we outline a few of those that we find particularly appealing.
\begin{enumerate}
\item What is the spectrum of the complete $k$-graph for $k > 3$?  What are the multiplicities for $k = 3$?
\item What do the spectra of other natural hypergraph classes look like?  For example, one might consider the generalized Erd\H{o}s-R\'{e}nyi random hypergraph, Steiner triple systems, Venn diagrams, etc.
\item Fully describe the eigenvalues of Cartesian products -- in particular, explain the ``sporadic'' ones.  Ultracubes are a natural object of study in this vein.
\item How does one compute the multiplicities of eigenvalues of hypergraphs in general?  Is there a ``geometric multiplicity'' analogous to the dimension of eigenspaces of matrices which provides a lower bound for this ``algebraic'' multiplicity?  Perhaps such an invariant can be defined via the algebraic varieties given by the equations $(\mathcal{A}_H -\lambda \mathcal{I})\bx^{k-1} = \mathbf{0}$, $\lambda \in \spec(H)$, as in the case of graphs.
\item Characterize those hypergraphs whose spectra are invariant under multiplication by $k$-th roots of unity, i.e., find the appropriate weakening of the hypotheses of Theorem \ref{kpartite} to achieve necessity.
\item How can one compute the spectrum of a hypergraph more efficiently?  Our computational experiments have struggled with hypergraphs on as few as six vertices.
\item How does $\spec(H)$ relate to other hypergraph invariants, such as the domination number, transversal number, etc.?
\item Is it true that, for a sequence of $k$-graphs $H$ on $n \rightarrow \infty$ vertices, if the spectrum is ``random-like'', then $H$ is quasirandom in the sense of \cite{chps11} or \cite{krs02}?  Are ``expansion properties'' of hypergraphs related to the size of the second-largest-modulus eigenvalue?
\end{enumerate}


\section{A Note on Computation}

As noted in Section 2.1, the characteristic polynomial of a $k$-graph $H$ is the resultant of the polynomials $F_i = \lambda x_i^{k-1} - \sum_{e\in H(i)} x^e$ where $i\in [n]$.  Hence computing the characteristic polynomial reduces to computing the resultant. For this computation, we use the algorithm described in Chapter 3, Section 4 of \cite{clo98}. We describe the algorithm here for completeness.\\

\noindent $\bullet$ Compute $\res(F_1, F_2, \ldots, F_n)$ as follows:\\

Let $d = n(k-1) -n +1$, and let $S$ be the set of all monomials of degree $d$ in the variables $x_1, \ldots x_n$. (We denote such a
monomial $x^\alpha$, where $x$ stands for a variable vector, and  $\alpha$ stands for an exponent vector.) Let
\begin{align*}
S_1 & = \{ x^\alpha \in S\, |\, x_1^{k-1} \text{ divides } x^\alpha\} \\
S_2 & = \{ x^\alpha \in S\setminus S_1 \,|\, x_2^{k-1} \text{ divides } x^\alpha\} \\
    & \vdots \\
S_n &  = \{ x^\alpha \in S\setminus \bigcup_{i=1}^{n-1} S_i \,|\, x_n^{k-1} \text{ divides } x^\alpha\} \end{align*}

This collection forms a partition of $S$ (by an easy pigeon-hole principle argument).  Fix an ordering on $S$, and define the $|S|\times |S|$ matrix $M$ as follows. The  $(\alpha, \beta)$ entry of $M $ is the coefficient of $x^\beta$ in the polynomial $F_i(x)\frac{x^\alpha}{x_i^{k-1}}$, where $i$ is the unique index such that $x^\alpha \in S_i$. In particular, any non-zero $(\alpha, \beta)$ entry is one of the coefficients of $F_i,$ where $i$ has
 $x^\alpha \in S_i$.

Call a monomial $x^\alpha \in S$  \emph{reduced} if there is exactly one $i$ so that $x_i^{k-1}$ divides $x^\alpha$. Form the matrix $M'$
by deleting the rows and columns of $M$ that correspond to reduced monomials.  The resultant of the system is then $\det(M)/\det(M')$, provided that the denominator does not vanish. In our case, each determinant is actually a characteristic polynomial, so this is never an issue.

Notice that for uniformity $k$, we have $|S| = \binom{n(k-1)+1}{n}$. For $k=3$, $|S| \approx 4^n/\sqrt{n}$. So the matrix $M$ has approximately $16^n/n$ entries. We need the characteristic polynomial of this matrix, a computation which is not obviously parallelizable. Hence the space and time demands of computing the characteristic polynomial of a hypermatrix are high, even for hypergraphs with a small number of vertices.

We implemented this algorithm using the free and open-source mathematics software system Sage, and used it for calculating characteristic polynomials, and then by finding the roots, calculating the spectrum.  The implementation we used, including some of the other routines we wrote to produce the hypermatrices and convert them to and from polynomials, are available at \verb"http://www.math.sc.edu/~cooper/resultants.html".  Running on fairly modest systems (8 core AMD with 8 gigabytes of RAM), we were unable to compute characteristic polynomials for some 3-graphs on only 9 vertices. The results we did obtain, including CPU times, are available on the same website.

\vskip .5cm

\noindent{\bf Acknowledgements.} Thank you to Duncan Buell, Fan Chung, David Cox, Andy Kustin and the anonymous referee for valuable comments and suggestions.

\end{document}